\documentclass[11pt]{amsart}

\usepackage{epigamath}


\usepackage[english]{babel}


\numberwithin{equation}{section}


\usepackage[shortlabels]{enumitem}
\usepackage{amsmath,amsfonts,amssymb,amsthm}

\usepackage{caption}
\usepackage{cancel}
\usepackage{subcaption}
\DeclareCaptionSubType*[arabic]{figure}  
\usepackage{chngcntr}
\counterwithout{subfigure}{section}


\usepackage[new]{old-arrows}

\usepackage{graphicx}
\usepackage{color}
\usepackage[all]{xy}
\usepackage{url}
\usepackage{setspace}
\usepackage{booktabs}
\usepackage{longtable}
\usepackage{latexsym}
\usepackage{comment}

\usepackage{longtable}
\usepackage{framed}

\usepackage{tikz}
\usetikzlibrary{cd}
\usetikzlibrary{patterns}
\usetikzlibrary{shapes}
\usetikzlibrary{decorations.fractals}




\theoremstyle{plain}
\newtheorem*{introthm}{Theorem}
\newtheorem{theorem}{Theorem}[section]
\newtheorem{lemma}[theorem]{Lemma}
\newtheorem{proposition}[theorem]{Proposition}
\newtheorem{corollary}[theorem]{Corollary}

\theoremstyle{definition}
\newtheorem{definition}[theorem]{Definition}

\theoremstyle{remark}
\newtheorem{example}[theorem]{Example}
\newtheorem{remark}[theorem]{Remark}


\DeclareMathOperator{\spec}{Spec}

\DeclareMathOperator{\ord}{ord}
\DeclareMathOperator{\supp}{supp}

\DeclareMathOperator{\TV}{\mathbb{T}\mathbb{V}}
\DeclareMathOperator{\cone}{cone}

\newcommand{\lra}{\longrightarrow}

\DeclareMathOperator{\length}{length}


\newcommand{\PP}{\mathbb P}
\newcommand{\A}{\mathbb A}

\newcommand{\N}{\mathbb N}
\newcommand{\R}{\mathbb R}
\newcommand{\T}{\mathbb T}
\newcommand{\V}{\mathbb V}
\newcommand{\Z}{\mathbb Z}
\newcommand{\C}{\mathbb C}
\newcommand{\Q}{\mathbb Q}

\newcommand{\CL}{{\mathcal L}}

\definecolor{intOrange}{rgb}{1.0,.310,.0} 
\newcommand{\til}[1]{\widetilde{#1}}

\newcommand{\spann}{\operatorname{span}}

\newcommand{\kst}{\,|\;}

\newcommand{\surj}{\rightarrow\hspace{-0.8em}\rightarrow}

\newcommand{\bM}{\ko{M}}

\definecolor{skin}{HTML}{FFECC9}
\definecolor{pumpkin}{HTML}{FEDFA9}
\definecolor{piggy}{HTML}{FFB99D}
\definecolor{fiolet}{HTML}{CD8F9C}
\definecolor{granat}{HTML}{677081}
\definecolor{ciemnyblekit}{HTML}{91A1B8}

\definecolor{oliwkowy}{HTML}{627037}
\definecolor{ciemnazielen}{HTML}{394D2E}
\definecolor{ciemnyfiolet}{HTML}{424444}
\definecolor{mocnyfiolet}{HTML}{717299}
\definecolor{jasnyfiolet}{HTML}{B0ABCC}
\definecolor{bladyfiolet}{HTML}{C9C7DB}

\definecolor{lightblue}{RGB}{135,206,250}

\definecolor{cNewton}{RGB}{210,75,90}
\definecolor{cNef}{RGB}{50,135,90} 
\definecolor{cdiv}{RGB}{10,100,155}
\definecolor{cthet}{RGB}{225,220,30}
\definecolor{ckothet}{RGB}{225,130,10}
\definecolor{cnob}{RGB}{165,115,170}

 
\newcommand{\curv}{\textcolor{black}{C}} 
\newcommand{\curvbar}{\textcolor{black}{C'}}
\newcommand{\latn}{N} 
\newcommand{\latm}{M} 
\newcommand{\var}{{X}} 
 
\newcommand{\op}{\textcolor{black}{m}}

\newcommand{\poly}{\textcolor{black}{\Delta}} 
\newcommand{\ds}{\textcolor{black}{D}}

\newcommand{\divpol}[1]{\textcolor{black}{\poly({#1})}}
\newcommand{\newton}{\operatorname{\textcolor{black}{newt}}}

\newcommand{\newpolv}{\textcolor{black}{\poly^{\newton}}} 

\newcommand{\nef}{\operatorname{nef}}

\newcommand{\nefpolv}{\textcolor{black}{\poly^{\nef}}} 

\newcommand{\nob}[2]{{\poly_{#1}(#2)}}
\newcommand{\direcm}{\textcolor{black}{v_\latm}} 
\newcommand{\direcn}{\textcolor{black}{v_\latn}} 
\newcommand{\direcnbot}{\textcolor{black}{v^{\bot}_\latn}} 
\newcommand{\flag}{Y_{\bullet}} 
  
\newcommand{\ray}{\rho}

\newcommand{\wid}[1]{\operatorname{width}_{#1}} 

\newcommand{\binomf}{\textcolor{black}{f}} 
\DeclareMathOperator{\vplus}{r_{{\max}}}
\DeclareMathOperator{\vminus}{r_{{\min}}} 
\DeclareMathOperator{\coneplus}{\sigma_{{\max}}}
\DeclareMathOperator{\coneminus}{\sigma_{{\min}}} 
\newcommand{\nablaplus}{\Delta_{{\max}}}
\newcommand{\nablaminus}{\Delta_{{\min}}} 
\newcommand{\quadplus}{\square_{{\max}}}
\newcommand{\quadminus}{\square_{{\min}}}

\newcommand{\thetcut}{\divpol{\ds}^{C}}
\newcommand{\thetcutelka}{\divpol{\ds}^{C(\el,\ka)}}
\newcommand{\dline}{\divpol{\ds}^{\direcn}}
\newcommand{\dlineone}{\textcolor{black}{v_1}}
\newcommand{\dlinetwo}{\textcolor{black}{v_2}}
\newcommand{\edge}{e}

\newcommand{\ebigone}{\textcolor{black}{e_1^{+}}}
\newcommand{\ebigtwo}{\textcolor{black}{e_2^{+}}}
\newcommand{\esmallone}{\textcolor{black}{e_1^{-}}}
\newcommand{\esmalltwo}{\textcolor{black}{e_2^{-}}}

\newcommand{\conbig}{\textcolor{black}{\con^+}}
\newcommand{\consmall}{\textcolor{black}{\con^-}}

\newcommand{\el}{\ell}
\newcommand{\ka}{k}
\newcommand{\de}{d}
\newcommand{\ee}{e}
\newcommand{\qu}{q}
\newcommand{\quhat}{\hat{\qu}}
\newcommand{\debarelka}{\textcolor{black}{\ko{\de(\el,\ka)}}}
\newcommand{\eebarelka}{\textcolor{black}{\ko{\ee(\el,\ka)}}}
\newcommand{\vau}{L(\el,\ka)}

\DeclareMathOperator{\orb}{orb}
\newcommand{\toric}{\T\V}  
\newcommand{\CO}{{\mathcal O}} 
\newcommand{\ko}{\overline}
 
\newcommand{\kG}{\Gamma}
\newcommand{\kothet}{\textcolor{black}{{\Xi(\el,\ka)}}}

\newcommand{\fan}{\Sigma} 
 
\newcommand{\tor}{\mathbb{T}} 
\newcommand{\val}{\operatorname{val}}

\renewcommand{\div}{\operatorname{div}}

\DeclareMathOperator{\conv}{conv}
\newcommand{\istar}{C}
\DeclareMathOperator{\con}{\sigma} 
\DeclareMathOperator{\vect}{u} 
\DeclareMathOperator{\vectK}{\vect}
  
\newcommand{\interior}{\operatorname{int}} 
\newcommand{\sg}{S}

\newcommand{\hilb}{{\mathcal H}}

\newcommand{\CLlk}{\textcolor{black}{{\mathcal L({\el},{\ka}})}}
\newcommand{\CLlktor}{\textcolor{black}{{{\mathcal L'}({\el},{\ka})}}}
\newcommand{\vautor}{\textcolor{black}{{{L'}({\el},{\ka})}}}

\newcommand{\thet}[1][{\el},{\ka}]{\textcolor{black}{\Theta(#1)}}
\newcommand{\thetq}{\textcolor{black}{\Theta(1,\qu)}}


\newcommand{\colonequals}{:=}
\newcommand{\equalscolon}{=:}

\setlist[enumerate,1]{label={\rm(\roman*)}, ref={\rm\roman*}} 


\EpigaVolumeYear{8}{2024} \EpigaArticleNr{6} \ReceivedOn{May 31, 2023}
\InFinalFormOn{November 23, 2023}
\AcceptedOn{December 15, 2023}

\title{On the finite generation of valuation semigroups on toric surfaces}
\titlemark{Finite generation of toric valuation semigroups}

\author{Klaus Altmann}
\address{Mathematisches Institut, Freie Universit\"at Berlin,
Arnimallee 3, 14195 Berlin, Germany}
\email{altmann@math.fu-berlin.de}
 
\author{Christian Haase}
\address{Mathematisches Institut, Freie Universit\"at Berlin, Arnimallee 3, 14195 Berlin, Germany}
\email{haase@math.fu-berlin.de}

\author{Alex K\"uronya}
\address{Institut f\"ur Mathematik, Goethe-Universit\"at Frankfurt,  Robert-Mayer-Str. 6, 60325 Frankfurt am Main, Germany}
\email{kuronya@math.uni-frankfurt.de}

\author{Karin Schaller}
\address{Mathematisches Institut, Freie Universit\"at Berlin, Arnimallee 3, 14195 Berlin, Germany}
\email{kschaller@math.fu-berlin.de}

\author{Lena Walter}
\address{Mathematisches Institut, Freie Universit\"at Berlin, 
 Arnimallee 3, 14195 Berlin, Germany}
\email{lenawalter@math.fu-berlin.de}

\authormark{K.~Altmann, C.~Haase, A.~K\"uronya, K.~Schaller, L.~Walter}

\AbstractInEnglish{We provide a combinatorial criterion for the finite generation of a valuation semigroup associated with an ample divisor on a smooth toric surface and a non-toric valuation of maximal rank. 
As an application, we construct a lattice polytope such that none of the  valuation semigroups of the associated polarized toric variety coming from one-parameter subgroups and centered at a non-toric point are finitely generated.}

\MSCclass{14C20, 14M25, 52B20}
\KeyWords{Newton--Okounkov bodies, toric surfaces, finite generation of valuation semigroups}


\acknowledgement{The first, second, fourth, and fifth authors have been partially supported by the Polish-German grant ``ATAG -- Algebraic Torus Actions: Geometry and Combinatorics'' [project number 380241778] of the Deutsche Forschungsgemeinschaft (DFG). The third author acknowledges support by the Deutsche For\-schungs\-gemeinschaft (DFG) through the Collaborative Research Centre TRR 326 ``Geometry and Arithmetic of Uniformized Structures'' [project number 444845124].}

\begin{document}


\maketitle

\begin{prelims}

\DisplayAbstractInEnglish

\bigskip

\DisplayKeyWords

\medskip

\DisplayMSCclass

\end{prelims}


\newpage

\setcounter{tocdepth}{1}

\tableofcontents


\section{Introduction}
\label{intro}
 Finite generation of semigroups or rings arising from geometric situations has been a question of interest for a long time. As a salient example, we can recall the finite generation of canonical or adjoint rings from birational geometry, which motivated the field through the minimal model program  for several decades; \textit{cf.}~\cite{BCHM}. The question of finite generation of valuation semigroups arising from Newton--Okounkov theory appears to be equally difficult in general, with little progress beyond the completely toric situation, but potentially great benefits such as the existence of toric degenerations, \textit{cf.}~\cite{anderson}, and completely integrable systems, \textit{cf.}~\cite{HaradaKaveh}, to name but a few. In this article,  we take a few steps away from the situation where every participant is toric: we consider valuation semigroups associated with torus-invariant divisors on toric surfaces with respect to a \emph{non-toric} valuation. 

 The main idea behind Newton--Okounkov theory is to attach com\-bi\-na\-to\-ri\-al/con\-vex-geometric objects to geometric situations to facilitate their analysis, in other words, to partially replicate the setup of toric geometry in settings without any useful group action. The basis for the theory was developed by Kaveh--Khovanskii \cite{KK12} and Lazarsfeld--Musta\c t\u a  \cite{LM09}, building on earlier work of Okounkov \cite{Ok}, but the subject has seen substantial growth in the last decade. By now applications of Newton--Okounkov theory range from combinatorics and representation theory through birational geometry, \textit{cf.}~\cite{KL_Inf,KL_Pos,KL_Reider,GeomAsp}, to mirror symmetry, \textit{cf.}~\cite{RW}, and geometric quantization in mathematical physics. 

Given a projective variety $\var$ and a divisor $\ds$ on $\var$, Newton--Okounkov theory associates to 
$(\var,\ds)$ a semigroup  $\sg_{\flag}(\ds)$, the valuation semigroup, and a convex body $\nob{\flag}{\ds}$, the Newton--Okounkov body of~$\ds$. Both the valuation semigroup and the Newton--Okounkov body depend, however, on a maximal rank valuation of the function field of $\var$ coming from an admissible flag $\flag$ of subvarieties. 

The Newton--Okounkov body $\nob{\flag}{\ds}$ is an asymptotic version of $\sg_{\flag}(\ds)$ and is, accordingly, a lot easier to determine. Newton--Okounkov bodies on surfaces end up being almost rational polygons; \textit{cf.}~\cite{KLM12}. If the section ring of $\ds$ is finitely generated, then a suitably general flag valuation will yield a rational simplex as its Newton--Okounkov body; \textit{cf.}~\cite{AKL}. In the case of a toric variety $\var$ with torus-invariant $\ds$ and $\flag$, the associated Newton--Okounkov body recovers the moment polytope of the polarized toric variety. 

In this paper, we will focus on the valuation semigroup $\sg_{\flag}(\ds)$, more concretely, on the question of whether or not it is finitely generated. It is a classical fact that $\sg_{\flag}(\ds)$ is often not finitely generated even if~$\var$ is a smooth projective curve. 

It is known that $\sg_{\flag}(\ds)$ is a finitely generated semigroup
if $\var$ is a toric variety, $\ds$ a torus-invariant divisor, and
$\flag$ an admissible flag of torus-invariant subvarieties. We
consider the next open question, namely, the case of toric surfaces
and non-torus-invariant flag valuations. Although the divisorial
geometry of toric surfaces themselves is not particularly complicated,
things get out of control once we start blowing up non-toric
points. Blowing up just one point on a toric surface can lead to surfaces with infinitely
many negative curves on them, as recent research of Castravet--Laface--Tevelev--Ugaglia \cite{castravet2020blown} illustrates. Blowing up many general points quickly leads to notoriously
difficult situations like Nagata's conjecture.

\subsection{Newton--Okounkov bodies}
\label{introNObodies}

Assume that $\var=\TV(\fan)$ is a smooth projective toric variety
of dimension~$n$.
Then ample torus-invariant divisors $\ds$ or their associated
line bundles $\CO_\var(\ds)$
can be understood as lattice polytopes
$\divpol{\ds}$ in the character lattice $\latm \cong\Z^n$ of
the torus $\tor$ acting on $\var$. Using this description, the starting fan
$\fan$ can be recovered as the normal fan of these polytopes.

Moreover, it is a well-known feature of the toric theory that the vector space
$\kG(\var,\CO_\var(\ds))$ has the set of lattice points $\divpol{\ds}\cap \latm $ as its
distinguished basis. That is, the polytope $\divpol{\ds}$ gives 
$\kG(\var,\CO_\var(\ds))$ not just a dimension but also a shape.

In \cite{LM09} and \cite{KK12}, this concept was generalized to
arbitrary projective varieties
$X$ (still of dimension~$n$). If we are given a so-called \emph{admissible flag}
$$\flag: \var=Y_0 \supseteq Y_1 \supseteq \ldots \supseteq Y_n $$
of nested (irreducible) varieties with $\dim Y_i=n-i$ that are
smooth in the special point
$Y_n$, then for every ample divisor $\ds$, there is an associated
convex body $\nob{\flag}{\ds}$  
(``Newton--Okounkov body'') in $\R^n$ reflecting many properties of $\ds$; 
see Section~\ref{dfnnobody} for details.

Note that $\Z^n$ has ceased to be a character
lattice because there is no longer a torus around. While the
Newton--Okounkov body
$\nob{\flag}{\ds}$  depends not on $\ds$ but only on its numerical class,
the dependence on the chosen flag is striking.

If, for instance, $\var$ is toric as at the very beginning, then 
the construction of $\nob{\flag}{\ds}$ recovers the correspondence
between divisors and polytopes we  mentioned above. But 
to make this true requires a {\em toric flag\,}; \textit{i.e.},
all subvarieties
$Y_i$ are supposed to be orbit closures. 

The fact that toric varieties with toric flags lead to well-known
polytopes is not a one-way street. In fact, if, for a general variety
$\var$, the semigroups $\sg_{\flag}(\ds)$ are finitely generated, then
they provide a toric degeneration of $\var$. This was shown in
\cite{anderson}.  Observe that in general the semigroup
$\sg_{\flag}(\ds)$ being finitely generated is much stronger than
$\nob{\flag}{\ds}$ being polyhedral.  This finite generation of the
valuation semigroup $\sg_{\flag}(\ds)$ is the main point of this
paper.

\subsection{Results}
\label{introResults}

In \cite{wellPoised} the finite generation was  shown for complexity one $\tor$-varieties with
toric flags. Note that this implies the toric case.
Whenever one is only interested in the Newton--Okounkov body (instead of the semigroup), 
there are more results:
The most general one solves the question for surfaces using Zariski decomposition; \textit{cf.}~\cite[Theorem 6.4]{LM09}. 
In particular, the Newton--Okounkov bodies are polyhedral in this case.   
Specializing this situation, \cite{HKW20} has provided
an explicit combinatorial description of $\nob{\flag}{\ds}$ 
for toric surfaces with certain \emph{non-toric} flags.

In the present paper, partially inspired by \cite{cohomLB}, we consider the very same setup of toric surfaces,
namely with the following admissible flag: 
$$\flag: \var=Y_0 \supseteq Y_1  \supseteq Y_2,$$
where $Y_1$ is the closure of a one-parameter subgroup of the torus,
which is non-torus-invariant and 
$Y_2$ is a general smooth point. Then we prove that, in dependence on $Y_1$,
the semigroups $\sg_{\flag}(\ds)$
can be both finitely generated and not finitely generated.

The  main result of the paper (Theorem~\ref{thmfgsg}) comes from understanding the relationship between the Newton--Okounkov body of $\ds$ and the Newton polygon associated with the \emph{non-toric} flag curve $Y_1$. The significance of our contribution lies in the fact that we infer the finite generation of valuation semigroups from asymptotic/convex-geometric data and provide a combinatorial criterion. 

In order to state our Theorem, we introduce a bit of terminology. For our flag $\flag$, we consider a non-torus-invariant curve $Y_1$ given as the closure of the one-parameter subgroup determined by a primitive vector $\direcn \in \latn$; our flag point is going to be $0\otimes_{\Z} 1$ on the torus $ \T \cong \N\otimes_{\Z} \C^{\times}$. For a strongly convex cone $\con \subseteq \latn_\R$ and a lattice point $\vect \in \interior(\con)\cap \latn$, we say that $\vect$ is \emph{strongly decomposable in $\con$} if $\vect=\vect'+\vect''$ for suitable $\vect',\vect'' \in \interior(\con)\cap \latn $. Given a divisor $\ds$ on $\var$, we construct strongly convex cones $\conbig$ and $\consmall$ associated with $\direcn$ that are spanned by certain rays of the fan of $\var$ (see Definition~\ref{dfndline}).  With this said, our main result goes as follows.

\begin{introthm}[Theorem~\ref{thmfgsg}]
Let $\var$ be a smooth toric surface associated with a fan 
$\fan$ and  $\ds$ an ample divisor on $\var$. 
The valuation semigroup $\sg_{\flag}(\ds)$ is finitely generated 
 if and only if 
 $\direcn$ is not strongly decomposable in $\conbig$ and
  $-\direcn$ is not strongly decomposable in $\consmall$.
\end{introthm}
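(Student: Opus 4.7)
The plan is to translate finite generation into a combinatorial statement about how lattice points of the dilates $m\divpol{\ds}\cap\latm$ distribute with respect to the linear form $\langle \cdot,\direcn\rangle$, and then to exhibit either explicit generators or an infinite obstruction depending on the decomposability status of $\direcn$. Since $\ds$ is torus-invariant, $H^0(\var,\CO_\var(m\ds))$ has the lattice basis $\{\chi^u : u\in m\divpol{\ds}\cap\latm\}$, and any section writes $s=\sum_u c_u\chi^u$. Because $Y_1$ is the closure of a one-parameter subgroup and therefore meets the open torus, no individual character $\chi^u$ vanishes on $Y_1$: the first coordinate $\ord_{Y_1}(s)$ only arises through cancellation. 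In a local parameter $\tau$ on $Y_1$ around $Y_2$, the restriction of $s$ to $Y_1$ takes the form $\sum_u c_u\exp(\tau\langle u,\direcn\rangle)$, whose order in $\tau$ equals the smallest $k$ with $\sum_u c_u\langle u,\direcn\rangle^k\neq 0$. After extracting $\ord_{Y_1}(s)$, the second coordinate records the ordinary vanishing at $Y_2$ in a transverse direction. This yields a combinatorial description of $\sg_{\flag}(\ds)$ driven by the values $\langle u,\direcn\rangle$ on the polytope and by how characters on the $\direcn$-maximal and $\direcn$-minimal parts of $\divpol{\ds}$ can interact --- precisely the data encoded by $\conbig$ and $\consmall$.

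For the ``only if'' direction I would argue by contrapositive. Suppose $\direcn=\vect'+\vect''$ is a strong decomposition in $\conbig$ (the case of $-\direcn$ in $\consmall$ being symmetric). By the defining property of $\conbig$, which couples certain rays of $\fan$ lying on the $\direcn$-top side of $\divpol{\ds}$, the summands $\vect',\vect''$ produce, on suitable dilates $m\divpol{\ds}$, families of pairs of lattice characters whose leading contributions to the $\tau$-expansion collide and cancel. Tuning coefficients creates sections of $m\ds$ whose $\ord_{Y_1}$ exceeds what any single character can produce, and the construction scales consistently across arbitrary multiples. One thereby obtains an infinite family of elements of $\sg_{\flag}(\ds)$ whose first coordinate grows faster in $m$ than any $\N$-combination of a fixed finite set of generators, forcing non-finite generation --- morally the two-dimensional analogue of the familiar failure of $\sg_{\flag}(\ds)$ for non-toric flag valuations on projective curves.

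For the ``if'' direction, assume that neither $\direcn$ nor $-\direcn$ is strongly decomposable in its respective cone. Then each $\direcn$-extremal edge of $\divpol{\ds}$ admits only boundedly many non-trivial cancellation patterns: every interior lattice representative of $\direcn$ in $\conbig$ (and symmetrically of $-\direcn$ in $\consmall$) can only be split through the boundary rays of the cone, which contributes a bounded correction on top of the purely toric data. The toric part is finitely generated by the torus-invariant flag theory (cf.~\cite{wellPoised}), and packaging the two contributions produces an explicit finite generating set of $\sg_{\flag}(\ds)$. The hardest point will be the ``only if'' direction: one must verify that the higher vanishing along $Y_1$ produced by a strong decomposition is genuinely new, rather than compensated by a smaller second coordinate coming from an already-available generator. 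The surface hypothesis (making the Newton polygon of $Y_1$ one-dimensional) and the general-point assumption on $Y_2$ (ruling out accidental collinearities among the exponents $\langle u,\direcn\rangle$) enter exactly at this step.
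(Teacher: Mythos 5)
Your proposal does not close either direction, and the mechanism you propose for the ``only if'' direction is not the one that actually obstructs finite generation. You argue that a strong decomposition of $\direcn$ in $\conbig$ produces extra cancellations, hence sections with unexpectedly large first coordinate $\ord_{Y_1}(s)$, whose first coordinate then ``grows faster than any $\N$-combination of a fixed finite set of generators.'' But the set of feasible pairs $(\el,\ka)$ with $\ka=\ord_{\curv}(s)$ for some $s\in\kG(\var,\CO_\var(\el\ds))$ is always the set of lattice points of a rational polyhedral cone ($0\le\ka\le\quhat\,\el$), so the first two coordinates can never obstruct finite generation. The actual obstruction lives in the \emph{third} coordinate: for fixed $(\el,\ka)$ the attainable orders of vanishing at $Y_2$ form the full interval $\{0,\dots,\eebarelka-1\}$ with $\eebarelka=\#\pi(\thet\cap\latm)$ (the content of the Vandermonde-type Lemma~\ref{lem-Vandermonde} and Theorem~\ref{theoshapesg}), and non-finite generation occurs precisely when $\eebarelka-1$ persistently falls \emph{short} of the asymptotic maximum $\el\,\de(\ka/\el)$, so that a vertex of $\nob{\flag}{\ds}$ never lifts to the semigroup. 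Strong decomposability of $\direcn$ in $\conbig$ is exactly the condition that the projection $\pi=\langle\cdot,\direcn\rangle$ misses the value $1$ on the lattice points of the tangent cone at the $\direcn$-extremal vertex of $\thet$ (Lemma~\ref{lem:decomposable}), creating a gap next to that vertex that no dilation closes. So the decomposition hypothesis yields \emph{fewer} attainable valuation vectors, not more; your inequality points the wrong way. (Relatedly, the expansion $\sum_u c_u e^{\tau\langle u,\direcn\rangle}$ you write down computes the vanishing at $Y_2$ \emph{along} $Y_1$, i.e.\ the second coordinate of $\val_{\flag}$; the transverse order $\ord_{Y_1}(s)$ is governed instead by divisibility by the binomial $x^{\direcm}-1$.)

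For the ``if'' direction you assert that bounded ``cancellation patterns'' plus the toric theory yield an explicit finite generating set, but no generating set is exhibited and no finiteness principle is invoked. What is needed, and what the paper supplies, is a general criterion: a strictly positive semigroup with polyhedral limit is finitely generated if and only if all vertices of the limit polytope lift to the semigroup (Proposition~\ref{prop-liftPoints}, proved via Dickson's lemma); without such a statement, control of the extremal behaviour does not by itself produce generators, as Example~\ref{ex-sgPolLim} with $\{0\}\cup(\Z_{\geq 1}\times\Z_{\geq 1})$ already shows. To repair the proposal you would need (i) the exact layer-by-layer description of $\sg_{\flag}(\ds)$ via $\eebarelka$, (ii) the lifting criterion for the vertices $(1,\qu,\de(\qu))$ in terms of surjectivity of $\pi$ on $\lambda\thetq\cap\latm$ (Theorem~\ref{theofg} and Lemma~\ref{lem_valpoint}), and (iii) the passage from the $q$-dependent tangent cones to the fixed cones $\conbig$ and $\consmall$.
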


To illustrate the combinatorial content, Figure~\ref{introfig7gon} pictures the situation in the case of the 7-gon of \cite{castravet2020blown}, where the blow-up surface $\var=\toric(\fan)$ accomodates infinitely many negative curves (\textit{cf.} Figure~\ref{fig7gon} for a complete picture).

\begin{figure}[h!]
\centering
     \begin{subfigure}[h]{0.45\textwidth}
         \centering
        \includegraphics[]{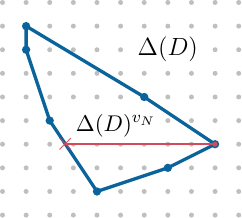}
             \caption{}
         \end{subfigure}
     \begin{subfigure}[h]{0.45\textwidth}
  \centering
        \includegraphics[]{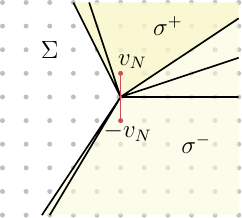}
                \caption{}
     \end{subfigure}
       \caption{{\bf The {good} {\boldmath$7$}-gon {\boldmath$\divpol{\ds}$}.} 
       (A) The polytope $\divpol{\ds}$ and
       the rational line segment $\dline$.
       (B) The normal fan $\fan$ of $\divpol{\ds}$ 
       together with the two cones $\conbig \ni \direcn$  and 
       $\consmall \ni - \direcn$.}
\label{introfig7gon}
\end{figure} 

As an application of the above theory, we construct in Example~\ref{final:ex} a lattice polygon with a strong non-finite-generation property. To be more concrete, we look at the ample divisor $\ds$ associated with the polytope $\divpol{\ds}$ given in Figure~\ref{finalex}\subref{finalex1} on the toric variety $\var = \toric(\fan)$ corresponding to the fan $\fan$ in Figure~\ref{finalex}\subref{finalex2}.  Example~\ref{final:ex} shows that the semigroup $\sg_{\flag}(\ds)$ will not be finitely generated for any $\direcn \in N$ we pick.

\section{Notation and preliminaries}
\label{preNot}

Let $X$ be a two-dimensional smooth projective variety, and assume that we are given an admissible flag
$$\flag: \var=Y_0 \supseteq Y_1  \supseteq Y_2, $$
as in Section~\ref{introNObodies},  and an ample divisor $\ds$  on $\var$. 
 
\subsection{Newton--Okounkov bodies and valuation semigroups} 
\label{dfnnobody}\label{valuations}

Following \cite{LM09}, we obtain a rank two
valuation-like function
(or, equivalently, a rank two valuation of the function field of $X$,
see \cite{KMR_Abstract})
$$\val_{\flag}\colon\kG(X,\CO_{\var}(\ds))\setminus\{0\}\lra\Z^2$$
as follows:
Let $f$ be an equation for $Y_1$ near $Y_2$. 
For a non-trivial section $s\in\kG(X,\CL)$ of a line bundle
$\CL$, \textit{e.g.}, $\CL=\CO_{\var}(\ds)$, we define
$$\val_{\flag}(s)=(\val_1(s),\val_2(s)) \colonequals
\left(\ord_{Y_1}(s), \ord_{Y_2}\left(\til{s}|_{Y_1}\right)\right),$$
where 
$\til{s} \colonequals s/f^{\val_1(s)}$ is a section of 
$\CL\otimes\CO_{\var}(-\val_{1}(s)\cdot Y_{1})$.

The valuation semigroup $\sg_{\flag}(\ds) $ of $\ds$ (with respect to the flag $\flag$) is
defined as
\begin{align*}
\sg_{\flag}(\ds) \colonequals \{(\el, \val_{\flag} (s))\, \vert \, 
s \in \kG(\var,\CO_\var(\el \ds))\setminus \{0\} , \el \in \N \} \subseteq \N^{3}.
\end{align*}
The Newton--Okounkov body of $\ds$ (with respect to the flag $\flag$) is
defined as the set
\begin{align*} 
\nob{\flag}{\ds} \colonequals \overline{\bigcup_{\el \geq 1}{ \frac{1}{\el} \left\{ \val_{\flag} (s) \, \vert \, s \in \kG(\var,\CO_\var(\el \ds))\setminus \{0\} ,  \el \in \N \right\}}} \subseteq \R^2.
\end{align*}
For Newton--Okounkov theory on surfaces, see \cite{KL_surf}. 

\subsection{Toric setup}
\label{setup}

Let $\latn\cong\Z^2$ be a two-dimensional lattice with dual lattice $\latm$ and $\fan$ a 
smooth complete fan associated with the toric surface $\var=\toric(\fan)$. We may assume that our ample divisor $\ds$ is toric, hence is represented by a polytope $\divpol{\ds}$. Recall that $\tor=\spec{(\C[\latm])}$ is  our torus acting on $\var$; hence $\latm$ becomes its character lattice and $\latn$ the associated lattice of one-parameter subgroups. 
 
Next, fix an admissible flag 
$\flag: \var \supseteq Y_1  \supseteq Y_2$ as follows:
Choose a primitive element $\direcn \in \latn$. The embedding
$$\iota\colon \ko{\latn} \colonequals \Z\direcn  \longhookrightarrow \latn$$
induces a map of fans, hence a toric map $\iota\colon \PP^1 \to \var=\toric(\fan)$.
Set $Y_1 \colonequals
\iota(\PP^1)$ and $Y_2 \colonequals \iota(1 \in \PP^1)$.

Within the torus $\tor$, the curve $\curv \colonequals Y_1$ is given by the binomial equation
$\binomf \colonequals x^{\direcm}-1$ with $\direcm \in \latm$ being one of the two primitive elements of
$\direcnbot \subset \latm_{\R}$. The associated Newton polytope 
$\newpolv \colonequals \newton(\binomf)$
is the line segment $[0,\direcm]$ connecting $0$ and $\direcm$ in $\latm_{\R}$. This way 
$\iota\colon \PP^1 \to \curv$ is the normalization map.

For a toric line bundle $\CL$ on $\var$, the pullback $\iota^* \colon \Gamma(X,\CL) \to \Gamma(\PP^1, \iota^*\CL)$ corresponds to the projection $$\pi \colon \latm \longrightarrow \latm/\Z\direcm \equalscolon \ko{\latm} ,$$
which we will identify with $\direcn \colon \latm \to \Z \cong \ko{\latm}$.

Observe that this almost fits the setup of~\cite{wellPoised} as $Y_1$ is invariant under the codimension one torus given by $\direcn$. We only deviate from~\cite{wellPoised} by choosing a non-invariant flag point $Y_2$.

\subsection{Torifying the curve}
\label{defval}
Note that  $\curv$ is also  a prime (Cartier) divisor on $X$, which properly
intersects all torus-invariant curves, \textit{i.e.}, all
boundary curves of $\var$. Therefore, $\curv$ is nef and 
$\curvbar\colonequals \curv - \div(\binomf) $
is its $\tor$-invariant representative. 

\begin{lemma}
\label{shapenefpoly}
The polygon $\nefpolv\colonequals\divpol{  \curvbar}$ is given by
$$\nefpolv= \left\{ \op \in \latm_\R \, \vert \, 
\langle \op, \ray \rangle \geq
\min\{0,\langle\direcm,\ray\rangle\} 
\text{\rm\ for all } 
\ray \in \fan(1) \right\},$$
where the rays $\ray\in\fan(1)$ are identified with their first $($hence
primitive$)$ lattice points.
\end{lemma}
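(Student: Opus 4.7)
The approach is to reduce the lemma to a routine facet description by computing $\curvbar$ explicitly as a torus-invariant Weil divisor $\sum_{\ray \in \fan(1)} a_\ray D_\ray$, and then applying the standard polytope formula
\[
\divpol{\textstyle \sum_\ray a_\ray D_\ray} \;=\; \bigl\{\op \in \latm_\R \;\big|\; \langle \op,\ray\rangle \geq -a_\ray \text{ for all } \ray \in \fan(1)\bigr\}.
\]
Everything will follow once we identify $-a_\ray = \min\{0,\langle \direcm,\ray\rangle\}$ for each ray.

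The main step is to compute the principal divisor $\div(\binomf) = \div(x^{\direcm}-1)$ on $\var$. Since $\direcm$ is primitive, the character $x^{\direcm} \colon \tor \to \C^\times$ has connected kernel, which coincides with $\curv \cap \tor$, and $\binomf$ vanishes to order one along that kernel; hence $\div(\binomf) = \curv + \sum_\ray n_\ray D_\ray$ with $n_\ray = \ord_{D_\ray}(\binomf)$. I would compute $n_\ray$ by splitting on the sign of $\langle \direcm,\ray\rangle$. If $\langle \direcm,\ray\rangle > 0$, the monomial $x^{\direcm}$ vanishes on $D_\ray$, so $\binomf$ takes the value $-1$ generically on $D_\ray$ and $n_\ray = 0$. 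If $\langle \direcm,\ray\rangle < 0$, factor $\binomf = x^{\direcm}(1 - x^{-\direcm})$; the second factor is a unit at the generic point of $D_\ray$ by the previous case, giving $n_\ray = \ord_{D_\ray}(x^{\direcm}) = \langle \direcm, \ray\rangle$. Finally, if $\langle \direcm,\ray\rangle = 0$, then $x^{\direcm}$ restricts to a non-trivial character on the one-dimensional orbit $\orb(\ray) \subset D_\ray$ (non-trivial because $\direcm$ is a non-zero element of the rank-one lattice $\ray^\perp \cap \latm$), hence $\binomf$ is not identically zero along $D_\ray$ and $n_\ray = 0$. Combining the three cases yields $n_\ray = \min\{0,\langle\direcm,\ray\rangle\}$.

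Substituting back, $\curvbar = \curv - \div(\binomf) = -\sum_\ray \min\{0,\langle\direcm,\ray\rangle\}\, D_\ray$, so $a_\ray = -\min\{0,\langle\direcm,\ray\rangle\}$, and the polytope formula produces exactly the inequalities in the statement. The only delicate point is the case $\langle \direcm,\ray\rangle = 0$, where one must verify that $x^{\direcm}|_{D_\ray}$ is not identically one; this is precisely where the primitivity (and in particular non-vanishing) of $\direcm$ enters.
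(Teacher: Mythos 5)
Your proof is correct and follows essentially the same route as the paper: both reduce the claim to computing the boundary coefficients $\ord_{D_\ray}(\binomf)$ of the principal divisor of $x^{\direcm}-1$ and then invoke the standard facet description of $\divpol{\cdot}$. Your three-way case split by the sign of $\langle\direcm,\ray\rangle$ matches the paper's dichotomy $\ray\neq\pm\direcn$ versus $\ray=\pm\direcn$, with the delicate case $\langle\direcm,\ray\rangle=0$ handled by an equivalent observation (non-triviality of the character $x^{\direcm}$ on $\orb(\ray)$ in place of the local model $\ord_y(x-1)=0$).
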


\begin{proof}
The prime divisor $\ds_\ray=\ko{\orb(\ray)}$ associated with a ray $\ray\in\fan(1)$
appears in the torus-invariant Weil divisor $\curvbar$ as often as it
does in the (non-equivariant) principal divisor $-\div(\binomf)$.
Thus,
$$ \nefpolv= \left\{ \op \in \latm_\R \, \vert \, 
\langle \op, \ray \rangle \geq
\ord_\rho (\binomf)
\text{ for all } \ray \in \fan(1) \right\},$$
and it remains to discuss
$\ord_\rho (\binomf)$.
If $\ray\neq\pm\direcn$, then the $\ray$-orders of the two summands of
$\binomf=x^{\direcm}-1$ are different, hence
$$\ord_\ray(f)=\min\{\ord_\ray(1),\ord_\ray(x^{\direcm})\}=
\min\{0,\langle\direcm,\ray\rangle\} .$$ 
If $\ray=\pm\direcn$, then  the situation looks locally
like $\ord_y(x-1)=0$ in $\A^2$.
\end{proof}

See  \cite[Proposition 3.1]{HKW20} for a different proof.

\begin{figure}[b]
     \centering
          \hspace{-1.5cm}
     \begin{subfigure}[b]{0.45\textwidth}
         \centering
               \includegraphics[]{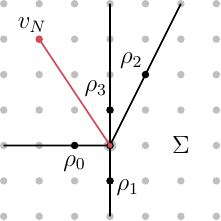}
         \caption{}
         \label{figex11}
     \end{subfigure}
      \hspace{-0.5cm}
     \begin{subfigure}[b]{0.45\textwidth}
         \centering
        \includegraphics[]{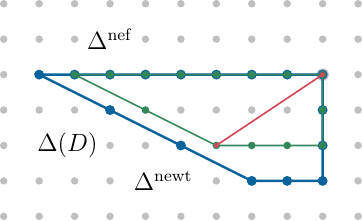}
                 \caption{}
         \label{figex12}
     \end{subfigure}
       \caption{{\bf Input data.} (A) The fan $\fan$ together with 
       its set of rays $\{  \rho_i \, \vert \, 0 \leq i \leq 3 \}$ and a primitive element $\direcn=(-2,3) \in \latn$.
       (B) The polytope $\divpol{\ds}$ corresponding to the ample divisor $\ds= 8\ds_2+3 \ds_3$ on $\var=\TV(\fan)$,  
       the Newton polytope $\newpolv$ given by $\direcm=[-3,-2] \in \latm$, and the polytope $\nefpolv$ corresponding to $\curvbar$.
}
\label{figex1}
\end{figure}

\begin{example} \label{ex1}
Let $\var=\TV(\fan)$ be the toric surface associated with the fan 
$\fan$, where 
 $\fan(1)=\{  \rho_i \, \vert \, 0 \leq i \leq 3 \}$ with
$$\rho_0= (-1,0),\; \rho_1=(0,-1),\; \rho_2=(1,2), \text{ and } \rho_3=(0,1).$$
As in Lemma~\ref{shapenefpoly}, we identify the rays $\rho_i$ with their generating lattice points (\textit{cf.} Figure~\ref{figex1}\subref{figex11}).  We denote by $\ds_i\colonequals\ko{\orb(\rho_i)}$ ($0 \leq i \leq 3$) the toric prime divisors on $\var$.  Then, $\ds\colonequals 8\ds_2+3 \ds_3$ is an ample divisor on $\var$, which corresponds to the polytope
$$\divpol{\ds} = \conv([0,0],[-8,0],[-2,-3],[0,-3]) .$$
We take $\curv = \{(t^{-2},t^3)\kst t\in\C \}$ as our
curve for the non-toric flag. This means that
$\direcn = (-2,3) \in N$ and $\direcm= [-3,-2] \in \latm$, hence
$$\newpolv = \conv([0,0],\,[-3,-2]).$$  
Since the boundary part of $\div(\binomf)$
equals $-7\ds_2-2\ds_3$, we obtain $\curvbar=7\ds_2+2\ds_3$ 
with nef polytope (\textit{cf.} Figure~\ref{figex1}\subref{figex12})
$$ \nefpolv= \conv([0,0],[-7,0],[-3,-2],[0,-2]) .$$ 
\end{example} 

\subsection{An alternative view on {\boldmath $\nefpolv$}}
\label{plusminus}

Beside the explicit description of Lemma~\ref{shapenefpoly}, it is possible to describe the shape of $\nefpolv$ in the following more combinatorial way.
The relation $\direcm \in \direcnbot$ among our curve parameters means
$$\langle 0, \direcn \rangle = \langle \direcm, \direcn \rangle = 0;$$
\textit{i.e.}, $\newpolv=\conv(0,\direcm)$ is contained in the level set
$[\direcn=0]$.

We denote by $\vplus, \vminus \in \latm$ the vertices of $\divpol{\ds}$, where 
$\langle \divpol{\ds}, \direcn \rangle$ 
admits its extremal values
(\textit{cf.} Figure~\ref{figex2}\subref{figex21}).
Moreover, we define
$\coneplus, \coneminus$ to be the two-dimensional 
cones generated by the two edges of $\divpol{\ds}$ 
that contain the vertices $\vplus$ and $\vminus$, respectively.

We take the line segment $\newpolv$ and fit it inside
the cone $\coneplus$ until it hits both rays of this cone. In this
way, we construct a lattice triangle $\nablaplus$ with base $\newpolv$
and top vertex $\vplus$. We construct $\nablaminus$ (\textit{cf.}
Figure~\ref{figex2}\subref{figex21}) in a similar way.  In other
words, the cones $\coneplus$ and $\coneminus$ are cut off along
$\direcn$-constant lines producing edges of $\nablaplus$ and
$\nablaminus$, respectively, of lattice length one.  Note that both
cut lines are parallel translates. Gluing $\nablaplus$ and
$\nablaminus$ along $\newpolv$ (\textit{cf.}
Figure~\ref{figex2}\subref{figex22}) yields
$$\nefpolv = \nablaplus \cup_{\newpolv} \nablaminus.$$

Note that $\nefpolv\supseteq \newpolv$ is the smallest polytope
containing $\newpolv$ and having $\Sigma$ as a refinement of its
normal fan.  Actually, either $\nefpolv$ is a quadrangle with
$\newpolv$ serving as one of its diagonals, or it is a triangle with
$\newpolv$ as a side.

\begin{figure}[h!]
     \centering
     \begin{subfigure}[b]{0.45\textwidth}
         \centering
          \includegraphics[]{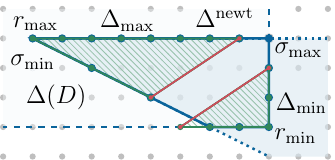}
                   \caption{}
              \label{figex21}
     \end{subfigure}
\hspace{-0.5cm}
     \begin{subfigure}[b]{0.45\textwidth}
         \centering
        \includegraphics[]{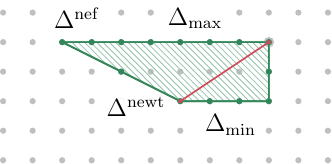}
         \caption{}
         \label{figex22}
     \end{subfigure}
       \caption{ {\bf Alternative view on {\boldmath $\nefpolv$}.}
           (A) The two 
           cones  $\coneplus$, $\coneminus$ and polytopes
            $\nablaplus$, $\nablaminus$  constructed as in Section~\ref{plusminus} with 
            $\direcm = [-3,-2]\in \latm$.
                    (B) Gluing  $\nablaplus$ and $\nablaminus$  along  $\newpolv$ 
 yields $\nefpolv$.
}
\label{figex2}
\end{figure}

\section{Valuation semigroups associated with non-toric flags}
\label{sgconstruction}

In  this section, we determine the valuation semigroup $\sg_{\flag}(\ds)$ associated with an ample  (Cartier) divisor $\ds$ and a \emph{non-toric} flag $\flag$ as a subset of $\N^3$. The main result is Theorem~\ref{theoshapesg}, where the abstract semigroup $\sg_{\flag}(\ds)$  is described in terms of lattice points coming from a polyhedral construction  in $\latm$.  

Let us fix $\el \geq 1$ and $\ka \geq 0$.
The space of sections 
$s \in \kG(\var,\,\CO_{\var}(\el \ds))$ which have vanishing order at least $\ka$ along $\curv$
is the image of  
$$ \kG\left(\var,\,\CO_{\var}(\el \ds-\ka\, \curv)\right) \overset{f^\ka \cdot}{\longrightarrow}  
\kG\left(\var,\,\CO_{\var}(\el \ds)\right), \quad \til{s} \longmapsto s .$$
We set $\CLlk\colonequals\CO_{\var}(\el\ds-\ka\,\curv)$ and 
$\vau\colonequals\kG\big(\var,\,\CLlk \big)$.
If we have $\ord_{\curv}(s) = \ka$, then $\ord_{Y_2}(\til{s}\vert_{\curv}) =
\ord_{1 \in \PP^1}(\iota^*(\til{s}))$ using the pullback via
$\iota\colon \PP^1 \to \var$.

\subsection{Return to toric geometry}
\label{makeToric}
Our goal is to understand the restriction of global sections via toric geometry. Therefore, we implement two changes. First, we will shift the linear series of the flag curve, which enables us to replace some of the line bundles we study with torus-invariant ones. Second, we will normalize the restriction.

We are going to use $\curvbar= \curv - \div(\binomf)$
from Section~\ref{defval}. In terms of the associated sheaves, this means
$$\CO_{\var}(  \curvbar)=\binomf\cdot\CO_{\var}(\curv),$$
where $\binomf$ is
 the equation $x^{\direcm}-1$ of $ \curv$ mentioned earlier.
This leads to the possibility of replacing
$\CLlk$ by the isomorphic, but torus-invariant, line bundle
$$\CLlktor \colonequals\CO_{\var}(\el\ds-\ka\,  \curvbar)=\binomf^{-\ka}\cdot \CLlk
\;\subseteq \C(\var) .$$
Accordingly, we set
$$\vautor \colonequals\kG\big(\var,\,\CLlktor\big) $$
and replace $\til{s}\in \vau$ with $s'\colonequals\binomf^{-\ka}\cdot \til{s}\in \vautor$. 

Recall that the nef invertible sheaves $\CO_{\var}(\el\ds)$ and $\CO_{\var}(\ka\,\curvbar)$
correspond to the polytopes $\el\divpol{\ds}$ and
$\ka \nefpolv$, respectively. 
This implies that
$\vautor$ has a monomial base provided by
\begin{align*}
\thet&\colonequals(\el\divpol{\ds}:\ka \nefpolv) \colonequals \{\op \in \latm_\R\kst\op+\ka \nefpolv \subseteq \el\divpol{\ds}\} \\
&\phantom{:}=(\el\divpol{\ds}:\ka \newpolv) \colonequals \{\op \in \latm_\R\kst\op+\ka \newpolv \subseteq \el\divpol{\ds}\} ; 
\end{align*}
see, \textit{e.g.}, the $(i=0)$-case of \cite[Theorem 2]{cohomLB}.

\begin{remark}
In toric geometry, there is a well-known way to associate to every divisor $D$ a polytope $P_D$ reflecting the global sections of $\CO_{\var}(D)$. If (and only if) $D$ is nef, this mapping $D \mapsto P_D$ allows one to recover $D$, \textit{i.e.}, creates a one-to-one correspondence.  Therefore, we introduce another polyhedral gadget $\Delta(D)$ providing a complete characterization for more general divisors. If $D$ is nef, then $\Delta(D)=P_D$ coincides with the former construction. However, for general divisors $D=A-B$ ($A,B$ nef), $\Delta(D)$ becomes a virtual polytope, \textit{i.e.}, a formal difference of two true polyhedra that is Minkowski-additive in its arguments. More concretely, we have $\Delta(A-B)=P_A-P_B$.  The global section polytope $P_D$ for arbitrary divisors $D=A-B$ can be recovered as $P_D = (\Delta(A):\Delta(B))$. If $D$ was nef, then we could choose $B$ as the zero divisor, \textit{i.e.}, $D=A$, and obtain again $P_A=\Delta(A)$.
\end{remark}

\begin{figure}[b]
     \centering
        \includegraphics[]{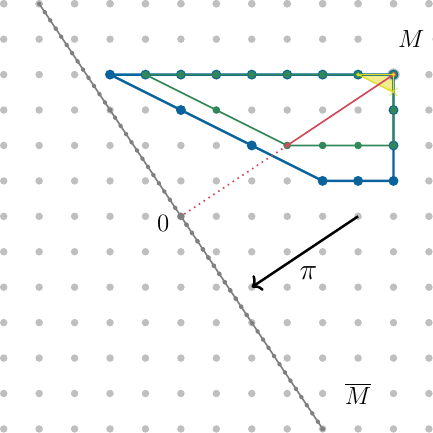}
       \caption{{\bf Projection map.} $\pi\colon\latm \surj\ko{\latm} = \latm/\Z\,[-3,-2]$, $[x,y] \mapsto -2x+3y$.} 
\label{figex3}
\end{figure}

\begin{example} \label{ex2}
Continuing Example~\ref{ex1}, we obtain 
\begin{align*}
\thet&= 
\left(\el\divpol{\ds}:\ka \nefpolv\right)\\
&=\{\op \in \latm_\R \,\vert\, 
\langle \op , (-1,0) \rangle \geq 0 , 
\langle \op, (0,-1) \rangle \geq 0 , \\
&\phantom{= \, \, \, \, \op \in \latm_\R \,\vert\, }\langle \op, (1,2) \rangle \geq 7\ka-8 \el,
\langle \op, (0,1) \rangle \geq 2\ka-3 \el\}.
\end{align*}
In particular, $\Theta(1,1) = \conv([0,0],[-1,0],[0,-1/2])$
(\textit{cf.} the yellow polytope in Figure~\ref{figex3}).
The projection  looks like
$\pi\colon M\surj\ko{\latm}= \latm/ \Z \direcm$.
This map (\textit{cf.} Figure~\ref{figex3}) can be identified with $\direcn\colon\latm\to\Z$, that is,  
$$\pi\colon [x,y] \longmapsto -2x+3y .$$
\end{example}

\subsection{An alternative view on {\boldmath $\thet$}} 
\label{altthet}
In general, $\fan$ is not the normal fan of $\thet$ as it was of
$\divpol{\ds}$.
Geometrically, this means 
that $\thet$ does,  in general, \emph{not} encode a nef Cartier divisor on $\var$.
While $\thet$ is defined as some kind of a difference of polytopes,  it is in general not true that the inclusions
$$\thet + \ka \nefpolv \subseteq  \el\divpol{\ds}\quad  \text{and}\quad  
\thet + \ka \newpolv \subseteq  \el\divpol{\ds}  $$ 
become  equalities (\textit{cf.} Example~\ref{ex4}).
We present a suggestion on how to overcome this. 

Recall from Section~\ref{plusminus} that we had denoted by $\vplus, \vminus \in \latm$ the vertices of $ \divpol{ \ds}$ where $\langle \divpol{ \ds}, \direcn \rangle$ attains its extremal values.  Similarly, we denote by $\vplus'(\el,\ka), \vminus'(\el,\ka) \in \latm_\R$ the $\direcn$-extremal vertices of $\thet$.  The latter lead to the line segments $\vplus'(\el,\ka) + \ka \newpolv$ and $\vminus'(\el,\ka)+ \ka \newpolv$, which cut the polytope $\el \divpol{ \ds}$ into three subpolytopes which we call $\quadplus(\el,\ka)$, $\thetcutelka$, and $\quadminus(\el,\ka)$.

More concretely, here is how we obtain $\quadplus(\el,\ka) $ and $\quadminus(\el,\ka) $: We take the line segment $\ka \newpolv$ and fit it inside the polytope $\el \divpol{ \ds}$ until it hits the boundary twice. This way, we construct the lattice polygon (not necessarily a triangle) $\quadplus(\el,\ka) $ such that $\ka \newpolv$ is one of its edges and $\vplus(\el,\ka) $ is one of its vertices. In a similar way, we construct $\quadminus(\el,\ka) $ using $\vminus(\el,\ka) $.

As $\el\divpol{\ds}$ before, the  polytope $\thetcutelka$ just defined  still fulfills the equality
$$\thet = \left(\el \thetcutelka :\ka \newpolv\right);$$
however, now we also  have the equality
\label{cutpol}
$$\thet + \ka \newpolv =  \el \thetcutelka.$$ 

 \begin{remark}
After this point,  we will use  the shorter notation
$\vplus'= \vplus'(\el,\ka) $,
$\vminus'=\vminus'(\el,\ka)$,
$\quadplus=\quadplus(\el,\ka)$,
$\thetcut=\thetcutelka$, 
$\quadminus= \quadminus(\el,\ka)$.
Nevertheless, one should keep in mind that all of these quantities depend on $\el$, $k$. 
\end{remark}

\begin{figure}[h!]
     \centering
     \begin{subfigure}[b]{0.45\textwidth}
         \centering
               \includegraphics[]{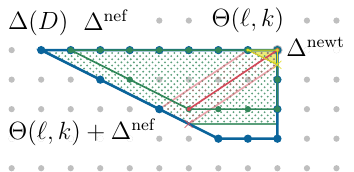}
         \caption{}
         \label{figex41}
     \end{subfigure}
\hspace{-0.5cm}
     \begin{subfigure}[b]{0.45\textwidth}
         \centering
         \includegraphics[]{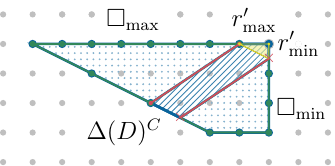}
                  \caption{}
         \label{figex42}
     \end{subfigure}
              \caption{{\bf Alternative view on {\boldmath $\thet$}.}
           (A) The Minkowski sum $\thet + \ka \nefpolv$ and $\thet + \ka \newpolv$.
                    (B) The cut of $\divpol{\ds}$ along $\vplus'+\ka \newpolv$ and $\vminus'+\ka \newpolv$ 
                    into $\quadplus$, $\quadminus$, and $\thetcut$
                    with $\thetcut = \thet +  \ka \newpolv$.
}
\label{figex4}
\end{figure}

\begin{example} \label{ex4}
Continuing Example~\ref{ex2}, 
Figure~\ref{figex4}\subref{figex41} shows that the inclusions
$$\thet +\ka\nefpolv  = \conv([0, 0],[-8, 0],[-3, -5/2],[0, -5/2])
\subsetneq  \divpol{\ds} $$
and 
$$\thet + \ka \newpolv = 
\conv([0, 0],[-1, 0],[-4, -2],[-3, -5/2],[0, -1/2])
\subsetneq  \divpol{\ds}$$ 
are strict in general for $(\el,\ka)=(1,1)$.
We cut the  polytope  $ \divpol{ \ds}$ along the line segments
$\vplus' +\ka \newpolv$ and $\vminus' +\ka \newpolv$  
into the subpolytopes 
$\quadplus =  \conv([-1, 0],[-8,0],[-4, -2])$, 
$\thetcut = \thet + \ka \newpolv$, 
and
$\quadminus =  \conv([0,-1/2],[0, -3],[-2, -3],[-3, -5/2]),$
where
$\vplus' =[-1,0] $ and $\vminus' =[0,-1/2]$ (\textit{cf.} Figure~\ref{figex4}\subref{figex42}).
\end{example}

\subsection{Projections of polytopes}
\label{projpoly}

We start by pulling back the sheaf $\CLlktor$. To this end, we define  
$$\debarelka \colonequals \el\cdot \wid{\direcn}(\divpol{\ds})-\ka \cdot \wid{\direcn}\left( \nefpolv\right),$$ where $ \wid{\direcn}(\cdot)$ denotes the lattice width of a polytope with respect to the linear functional $\direcn \in \latn$; \textit{i.e.}, if $\poly \subseteq \latm_\R$ is a polytope, then $\wid{\direcn}(\poly) \colonequals \max_{\op,\op' \in \poly} $ ${\vert \langle \op,\direcn \rangle-\langle \op',\direcn \rangle \vert}$.  Note that this equals the length of the line segment $\ko{\poly}\colonequals \pi(\poly)$; \textit{i.e.},
$$\debarelka =\el \cdot \length\left(\ko{\divpol{\ds}}\right)-\ka \cdot \length\left(\ko{ \nefpolv}\right).$$

\begin{proposition}
\label{prop-projPol0}
The pullback
$\iota^* \CLlktor$ is a line bundle on $\PP^1$ of degree
$\debarelka$.
\end{proposition}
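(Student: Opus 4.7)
The plan is to decompose $\CLlktor$ into nef pieces and then apply the standard toric dictionary for pullbacks along toric morphisms. Writing
\[
\CLlktor \;\cong\; \CO_\var(\el\ds)\otimes\CO_\var(-\ka\,\curvbar)
\]
exhibits $\CLlktor$ as a difference of two \emph{nef} torus-invariant line bundles. Since pullback along $\iota$ is compatible with tensor products, and the degree of a line bundle on $\PP^1$ is additive, it suffices to compute
\[
\deg\iota^*\CLlktor \;=\; \el\cdot\deg\iota^*\CO_\var(\ds)\;-\;\ka\cdot\deg\iota^*\CO_\var(\curvbar).
\]

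The key ingredient I intend to use is the following piece of toric dictionary: for any nef torus-invariant line bundle $L$ on $\var$ with moment polytope $P_L\subseteq\latm_\R$, the pullback $\iota^*L$ on $\PP^1$ is a line bundle of degree $\wid{\direcn}(P_L)$. I would verify this via support functions. The morphism $\iota$ is the toric morphism induced by the lattice inclusion $\Z\direcn\hookrightarrow\latn$, so the support function $\phi_{\iota^*L}$ on the fan of $\PP^1$ is the restriction of $\phi_L$: it evaluates to $\phi_L(\direcn)$ on the primitive ray $+1$ and to $\phi_L(-\direcn)$ on the ray $-1$. Since $\phi_L(v)=\min_{m\in P_L}\langle m,v\rangle$ for nef $L$, the resulting line bundle on $\PP^1$ has degree
\[
-\phi_L(\direcn)-\phi_L(-\direcn) \;=\; \max_{m\in P_L}\langle m,\direcn\rangle-\min_{m\in P_L}\langle m,\direcn\rangle \;=\; \wid{\direcn}(P_L)\;=\;\llen(\pi(P_L)).
\]

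Applying the dictionary to $L=\CO_\var(\ds)$ (polytope $\divpol{\ds}$) and to $L=\CO_\var(\curvbar)$ (polytope $\nefpolv$, established in Lemma~\ref{shapenefpoly}) then gives
\[
\deg\iota^*\CLlktor \;=\; \el\cdot\wid{\direcn}(\divpol{\ds})\;-\;\ka\cdot\wid{\direcn}(\nefpolv) \;=\; \debarelka,
\]
as claimed. I do not anticipate a serious obstacle. The one point that deserves care is that the combination $\el\ds-\ka\curvbar$ is itself in general not nef, so $\CLlktor$ cannot be read off from a single polytope; the additivity of $\deg\iota^*(\cdot)$ circumvents this by reducing everything to the two nef summands $\CO_\var(\ds)$ and $\CO_\var(\curvbar)$, where polytopal methods apply directly.
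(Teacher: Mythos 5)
Your argument is correct, and it reaches the degree formula by a slightly different road than the paper. The paper also reduces to the two nef summands, but it computes $\deg\iota^*\CO_\var(D')$ for each of them as an intersection number on the surface: it writes $\iota^*\CLlktor=\CO_{\PP^1}\big(\el\,(\ds.\curvbar)-\ka\,(\curvbar.\curvbar)\big)$ and then identifies $\ds.\curvbar$ and $\curvbar.\curvbar$ with the lattice widths $\wid{\direcn}(\divpol{\ds})$ and $\wid{\direcn}(\nefpolv)$ via the standard toric (mixed-volume) formulas. You instead bypass intersection theory and compute the pullback directly from the toric morphism $\iota\colon\PP^1\to\var$ induced by $\Z\direcn\hookrightarrow\latn$, restricting the support function and reading off the degree as $-\phi_L(\direcn)-\phi_L(-\direcn)=\wid{\direcn}(P_L)$. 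Both routes are legitimate; the paper's is shorter on the page because it quotes surface intersection theory, while yours is more self-contained and actually proves the finer statement $\iota^*\CO_\var(P_L)=\CO_{\PP^1}(\pi(P_L))$, which is exactly the extra information the paper needs (and proves by the same support-function observation) in the subsequent Proposition~\ref{prop-projPol}. Your closing remark correctly isolates the one genuine subtlety, namely that $\el\ds-\ka\curvbar$ itself need not be nef, so the additivity of $\deg\iota^*(\cdot)$ under tensor products is what makes the reduction to the two nef polytopes legitimate.
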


\begin{proof}
We obtain
\begin{align*}\pushQED{\qed}
\iota^* \CLlktor
=  \iota^* \CO_{\var}\left(\el \ds - \ka \curvbar\right) 
&= \iota^* \CO_{\var}\left(\el \divpol{\ds} - \ka \nefpolv\right) \\
 &= \CO_{\PP^1}\left(\el\cdot(\ds  . \curvbar) - \ka\cdot (\curvbar\, .\curvbar)\right) \\
&= \CO_{\PP^1}\left(\el\cdot \wid{\direcn}(\divpol{\ds})-\ka \cdot \wid{\direcn}\left( \nefpolv\right)\right) .\qedhere\popQED
\end{align*}
\renewcommand{\qed}{}
\end{proof}

\begin{remark}
Altogether, this yields the sequence of  inclusions
$$
\pi(\thet\cap \latm)\subseteq \ko{\thet} =\ko{\left(\el{\divpol{\ds}}: \ka { \nefpolv}\right)}  \subseteq \left(\el\ko{\divpol{\ds}}: \ka \ko{ \nefpolv}\right) =: \kothet,$$
which might be strict, where $\ko{\Delta} = \pi(\Delta)$ is 
the projection of any polytope $\Delta \subseteq \latm$ along $\direcm$. 
\end{remark}

\begin{example} \label{ex5}
Continuing Example~\ref{ex4}, let us   fix $(\el, \ka)=(1,1)$. Then  
the projected polytopes along $\pi$ are 
\begin{align*}
\ko{\divpol{\ds}}
&= \conv([-9],[16]),
\quad
\ko{\nefpolv}
= \conv([-6],[14]),\\
\ko{\thet}
&= \conv([-3/2],[2]),
\quad \text{and}\quad 
\kothet 
=  \conv([-3],[2]) ,
\end{align*}
where 
$\debarelka = 25 - 20 = 5$ (\textit{cf.} Figure~\ref{figex5}).
\end{example}

\begin{figure}[h!]
     \centering
        \includegraphics[scale=.8]{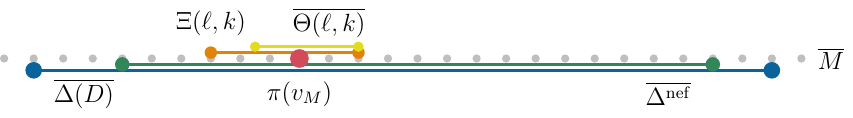}
       \caption{{\bf Projected polytopes.} 
}
\label{figex5}
\end{figure}

Recall that the (torus-equivariant) global sections of $\CLlktor$ are encoded by the elements of $\thet\cap \latm$. Under this identification, their pullbacks via $\iota^*$ are given by their images under $\pi$.  Denote their number by
 $$\eebarelka\colonequals\# \pi(\thet\cap \latm).$$
Summarizing what we have done so far, we obtain the following. 

\begin{proposition}
\label{prop-projPol}
The pullback
$\iota^* \CLlktor=\CO_{\PP^1}(\kothet)$ is a line bundle on $\PP^1$\! of degree
$\debarelka$.
Its global sections correspond to the elements of\,
$\kothet\cap\ko{\latm}$. Under this identification, the
subspace $\iota^* \vautor$ coincides with $\pi(\thet\cap \latm)$.
In particular,
$$\dim(\iota^* \vautor)=\eebarelka .$$
\end{proposition}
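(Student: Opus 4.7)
The first two sentences of the statement have essentially been proved already: the degree is given by Proposition~\ref{prop-projPol0}, and the basis description of sections of a line bundle on $\PP^1$ via lattice points of a defining segment is standard toric theory. So my plan is to identify $\iota^{*}\CLlktor$ with $\CO_{\PP^1}(\kothet)$ and then compute the image $\iota^{*}\vautor$ by working with monomial bases. The key observation that makes everything toric is that $\iota\colon\PP^1\to\var$ is a toric morphism coming from the lattice inclusion $\Z\direcn\hookrightarrow\latn$, while $\CLlktor$ is torus-equivariant by construction; consequently $\iota^{*}$ acts on characters simply via $\chi^{m}\mapsto\chi^{\pi(m)}$, where $\pi\colon\latm\to\ko{\latm}$ is identified with $\langle-,\direcn\rangle$.

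First I would match the equivariant line bundles. On the $\PP^1$-side, an invariant line bundle is encoded by its support function restricted to $\Z\direcn$, equivalently by a lattice segment in $\ko{\latm}_{\R}$. Restricting the support function of $\el\ds-\ka\curvbar$ along $\iota$, and using $\wid{\direcn}(\poly)=\length(\ko{\poly})$, one sees that this segment is precisely $(\el\ko{\divpol{\ds}}:\ka\ko{\nefpolv})=\kothet$; its length equals $\debarelka$, consistent with Proposition~\ref{prop-projPol0}. This proves $\iota^{*}\CLlktor\cong\CO_{\PP^1}(\kothet)$ and, by the standard polytope basis, yields a monomial basis for the global sections indexed by $\kothet\cap\ko{\latm}$.

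Next I would pass to the image of $\iota^{*}$ on global sections. Since $\vautor$ has the monomial basis $\{\chi^{m}:m\in\thet\cap\latm\}$, and since $\iota^{*}\chi^{m}=\chi^{\pi(m)}$ on the dense torus, the image $\iota^{*}\vautor$ is the linear span of $\{\chi^{\ko{m}}:\ko{m}\in\pi(\thet\cap\latm)\}$. Distinct characters of the torus of $\PP^1$ are linearly independent, so these monomials form a basis of $\iota^{*}\vautor$. Hence
\[
\dim(\iota^{*}\vautor)=\#\,\pi(\thet\cap\latm)=\eebarelka.
\]

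The only place one needs a bit of care is checking that the identification $\iota^{*}\CLlktor\cong\CO_{\PP^1}(\kothet)$ is compatible with the monomial bases (as opposed to matching only the underlying abstract line bundles), but this is exactly what the toric pullback formalism is designed to do: the character $\chi^{m}$, regarded as a rational section of $\CLlktor$, restricts to the character $\chi^{\pi(m)}$ viewed as a rational section of $\iota^{*}\CLlktor=\CO_{\PP^1}(\kothet)$, and the argument above then requires no further calculation.
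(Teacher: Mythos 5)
Your proposal is correct and follows essentially the same route as the paper: the degree comes from Proposition~\ref{prop-projPol0}, the identification $\iota^*\CLlktor\cong\CO_{\PP^1}(\kothet)$ rests on the toric functoriality $\iota^*\CO_{\var}(\poly)=\CO_{\PP^1}(\ko{\poly})$ (which you phrase via support functions), and the subspace claim follows from $\iota^*\chi^{m}=\chi^{\pi(m)}$. The only difference is one of detail: you spell out the character/linear-independence bookkeeping giving $\dim(\iota^*\vautor)=\eebarelka$, which the paper's proof leaves implicit.
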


\begin{proof}
Proposition~\ref{prop-projPol0} yields the degree $\debarelka$ of the pullback of $\CLlktor$.  It remains to show that this sheaf is precisely given via $\kothet$ as $\CO_{\PP^1}(\kothet)$, which is slightly finer information.  The statement holds since if $\poly \subseteq \latm_\R$ is a nef polytope (\textit{e.g.}, $\el \divpol{\ds}$ or $\ka \nefpolv$), then
$$\iota^* \CO_{\var}(\poly)= \CO_{\PP^1}(\pi(\poly))= \CO_{\PP^1}\left(\ko{\poly}\right).$$
This claim is valid for any toric map and does not  depend on having 
$\PP^1$ as a target.
\end{proof}

\begin{example} \label{ex6}
Continuing Example~\ref{ex5}, we obtain $\thet \cap \latm = \{[0,0],[-1,0]\} $ and therefore $\pi(\thet \cap \latm )= \{0,2\}$; \textit{i.e.}, $\eebarelka=2$ for $(\el,\ka)=(1,1)$.
\end{example}

\subsection{Shape of the semigroup}
\label{sgshape}

As we did before, let us fix a pair $(\el,\ka)$.  We know from Section~\ref{valuations} that we are supposed to collect the values $\ord_{Y_2}(\til{s}|_{\curv})$ for all possible sections $\til{s}$, where $Y_2=\{1\}$ is a smooth point on $\curv$.  In Section~\ref{makeToric}, we have transferred this setup to $\ord_{1 \in \PP^1}(\iota^*s')$, where $s'$ runs through all global sections represented by the polytope $\thet \subseteq \latm_\R $.

Proposition~\ref{prop-projPol} implies that the pullbacks $\iota^*s'$ run through all $\eebarelka$ elements of $\pi(\thet \cap{\latm}) \subseteq \ko{\latm} \cong \Z$.  Each element of $\Z$ represents a rational monomial function on $\PP^1$.  We are supposed to find the orders of vanishing at $1 \in \PP^1$ of all of their linear combinations.

\begin{lemma}
\label{lem-Vandermonde}
Let $Z\subset\Z$ be a finite subset with $\ee$ elements leading to the $\ee$-dimensional 
vector space
$$\C[Z]\colonequals\{f\in\C[t,t^{-1}] \, \vert \, \supp(f) \subseteq Z\} .$$
Then $\ord_1\C[Z]=\{0,1,\ldots,\ee-1\} = \ord_c\C[Z]$ for all $c \in \C^* \subseteq \PP^1$.
\end{lemma}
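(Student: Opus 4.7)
The plan is to interpret the lemma as a statement in linear algebra about a generalized Vandermonde matrix. Write $Z=\{n_1,\ldots,n_\ee\}$, so that $\{t^{n_i}\}_{i=1}^\ee$ is a $\C$-basis of $\C[Z]$, confirming $\dim_\C \C[Z]=\ee$. Fix $c\in\C^*$; every element of $\C[Z]$ is regular at $c$, hence $\ord_c f\in\Z_{\geq 0}$ for all nonzero $f$.

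The heart of the argument is to show that the jet evaluation map
\[
\phi_c\colon\C[Z]\longrightarrow\C^\ee,\qquad f\longmapsto\bigl(f(c),\,f'(c),\,\tfrac{1}{2!}f''(c),\,\ldots,\,\tfrac{1}{(\ee-1)!}f^{(\ee-1)}(c)\bigr),
\]
is a linear isomorphism. Granting this, the lemma follows immediately: $\ord_c f\geq\ee$ would force $\phi_c(f)=0$ and thus $f=0$, giving the bound $\ord_c\C[Z]\subseteq\{0,1,\ldots,\ee-1\}$; conversely, for each $j\in\{0,\ldots,\ee-1\}$, surjectivity of $\phi_c$ produces some $f\in\C[Z]$ whose jet vector is the $(j+1)$-st standard basis vector, and hence $\ord_c f=j$ exactly.

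To prove that $\phi_c$ is an isomorphism, I would first reduce to the case $c=1$ via the substitution $u=t/c$: this sends $t^{n_i}\mapsto c^{n_i}u^{n_i}$ and so restricts to a $\C$-linear automorphism of $\C[Z]$ (here we use $c\neq 0$) under which $\ord_c$ on the $t$-side corresponds to $\ord_1$ on the $u$-side. For $c=1$, the matrix of $\phi_1$ in the basis $\{t^{n_i}\}$ has $(i,j)$-entry $\binom{n_i}{j}=p_j(n_i)$, where $p_j(x)=\binom{x}{j}$ is a polynomial of degree exactly $j$. The columns therefore span the same subspace as the columns of the classical Vandermonde matrix $(n_i^j)_{i,j}$, to which the present matrix is related by an invertible upper-triangular base change in the column index; its determinant equals the Vandermonde determinant $\prod_{i<i'}(n_{i'}-n_i)$ up to a nonzero rational factor, and is nonzero since the $n_i$ are pairwise distinct.

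No step here is genuinely difficult: the main content lies in identifying the correct linear-algebraic reformulation (the jet map) and recognizing the generalized Vandermonde structure behind it. The one mild subtlety is that the reduction to $c=1$ genuinely needs $c\neq 0$, which is precisely why the statement is restricted to $c\in\C^*\subseteq\PP^1$ rather than all of $\PP^1$.
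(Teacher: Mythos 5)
Your proof is correct and follows essentially the same route as the paper: both arguments reduce the claim to the invertibility of the matrix of jet evaluations at the point, and establish that invertibility by exhibiting it as an invertible triangular base change (from monomials $p^j$ to falling factorials/binomial coefficients $\binom{p}{j}$) applied to the classical Vandermonde matrix in the distinct exponents. The only difference is cosmetic — you use divided derivatives and make the reduction from general $c\in\C^*$ to $c=1$ explicit via $u=t/c$, whereas the paper works with ordinary derivatives at $c=1$ and leaves that reduction implicit.
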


\begin{proof}
Set $Z=\{p_1,\ldots,p_{\ee}\}$. 
For an element $f\in\C[Z]$ with
$f= \lambda_1 \cdot  t^{p_1}+ \ldots + \lambda_e \cdot  t^{p_e}$ and
$d \in \N$, the rows of the matrix $P$ given as
$$ 
{\small
\left(\begin{array}{@{}cccccc@{}}
1 & 1 &  & \ldots & 1\\
p_1 & p_2 & & \ldots & p_{e}\\
p_1(p_1-1) & p_2(p_2-1) &  & \ldots & p_{e}(p_{e}-1)\\
p_1(p_1-1)(p_1-2) & p_2(p_2-1)(p_2-2) & & 
 \ldots & p_{e}(p_{e}-1)(p_{e}-2)\\
\vdots & \vdots &  & \ddots & \vdots\\
\,\,\, p_1\cdot \ldots \cdot (p_1-(d-1)) & p_2\cdot \ldots \cdot (p_2-(d-1))  & & 
 \ldots & p_e\cdot \ldots \cdot (p_e-(d-1)) 
\end{array}\right)
}
$$
encode $f(1)= \lambda_1 \cdot 1+ \ldots + \lambda_e \cdot 1$, $f'(1)= \lambda_1 \cdot p_1 \cdot 1+ \ldots + \lambda_e \cdot p_e \cdot 1$, $f''(1),\ldots,f^{(d)}(1)$.  Let $p$ be an arbitrary variable. Then the linear spaces
$$\textstyle L_1(p)\colonequals\spann_\Q\left\{1,p,p^2,\ldots,p^{\de}\right\}
\subseteq \Q[p]$$
and
$$\textstyle L_2(p)\colonequals\spann_\Q\left\{0! {p\choose 0},1! {p\choose 1}, 2!{p\choose 2},\ldots, d!{p\choose d}\right\}
\subseteq \Q[p] \vspace{1ex}$$
coincide because $L_1(p) \supseteq L_2(p)$ and $\dim(L_1(p))=\dim(L_2(p))$.
In particular, 
there exists an invertible lower triangular matrix $F$ such that $P=F\cdot V$, where 
$ V\colonequals (p_{j}^{i})_{0 \leq i \leq d, 1 \leq j \leq e}$
is the transposed Vandermonde matrix.
If we choose $d=e-1$, the matrices $V$ and thus $P$ are invertible.
Hence the system of linear equations
$$P \cdot (\lambda_1, \ldots, \lambda_e)^T= 0,$$ which is equivalent to the system $f(1)=f'(1)=f''(1)=\ldots=f^{(e-1)}(1)=0$, has the unique solution $f=0$, so $\ord_{1}(f) \ge e$ is impossible.

On the other hand, replacing the equation $f^{(k)}(1)=0$ by $f^{(k)}(1)=1$ ($0 \leq k \leq e-1$) yields an $f$ with $\ord_{1}(f) = k$.
\end{proof}

As a direct consequence, we obtain the following statement. 

\begin{theorem}
\label{theoshapesg}
The valuation semigroup is given as  
$$\sg_{\flag}(\ds) = \left\{\left(\el, \ka,\delta\right) \in \N^3\, \vert \,   0 \leq \delta \leq \eebarelka -1\right\}, $$
and
$\eebarelka=0$ for large $\ka \gg \el$. 
\end{theorem}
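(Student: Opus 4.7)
The plan is to fix a pair $\el \geq 1$, $\ka \geq 0$ and determine which $\delta \in \N$ satisfy $(\el, \ka, \delta) \in \sg_\flag(\ds)$. Unwinding Subsections~\ref{valuations}--\ref{makeToric}, $(\el, \ka, \delta)$ lies in the semigroup iff there is a nonzero $\til{s} \in \vau$ with $\til{s}|_\curv \neq 0$ (to force $\ord_\curv(f^\ka \til{s}) = \ka$ exactly, rather than something larger) and $\ord_{Y_2}(\til{s}|_\curv) = \delta$. Passing to the torus-invariant model through the isomorphism $\vau \cong \vautor$, $\til{s} \leftrightarrow s'$, and using that $\iota \colon \PP^1 \to \curv$ is a local isomorphism at $Y_2$, this is equivalent to the existence of $s' \in \vautor$ with $\iota^* s' \neq 0$ and $\ord_{1 \in \PP^1}(\iota^* s') = \delta$.

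The main computation then applies Proposition~\ref{prop-projPol}, which identifies $\iota^* \vautor$ with the subspace $\C[Z] \subseteq \C[t, t^{-1}]$ spanned by $\{t^p : p \in Z\}$, where $Z = \pi(\thet \cap \latm)$ has cardinality $\eebarelka$. Lemma~\ref{lem-Vandermonde} then directly supplies the set of orders at $1 \in \PP^1$ realised by nonzero elements of $\C[Z]$, namely $\{0, 1, \ldots, \eebarelka - 1\}$. Combining these two observations yields the claimed description $\sg_\flag(\ds) = \{(\el, \ka, \delta) \in \N^3 : 0 \leq \delta \leq \eebarelka - 1\}$: any such $\delta$ is realised by an $s' \in \vautor$ whose pullback is automatically nonzero, while larger values of $\delta$ are excluded by the lemma.

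For the asymptotic statement $\eebarelka = 0$ for $\ka \gg \el$, I would use that $\eebarelka = \dim(\iota^* \vautor) \leq \dim \kG(\PP^1, \CO_{\PP^1}(\debarelka))$, which equals $\max(0, \debarelka + 1)$. Proposition~\ref{prop-projPol0} gives $\debarelka = \el \cdot \wid{\direcn}(\divpol{\ds}) - \ka \cdot \wid{\direcn}(\nefpolv)$, and this becomes negative as soon as $\ka$ exceeds $\el \cdot \wid{\direcn}(\divpol{\ds}) / \wid{\direcn}(\nefpolv)$; for all such $\ka$ the dimension bound forces $\eebarelka = 0$.

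The only slightly subtle point, which I would verify at the outset, is that the two identifications (between $\til{s}|_\curv$ and $\iota^* \til{s}$, and between $\iota^* \til{s}$ and $\iota^* s'$ through the line-bundle isomorphism $\iota^* \CLlk \cong \iota^* \CLlktor$) preserve both nonvanishing along $\curv$ and the order of vanishing at the flag point $1 \in \PP^1$. Both facts follow from the constructions in Subsections~\ref{valuations} and~\ref{makeToric}: the isomorphism $\CLlk \cong \CLlktor$ is given by multiplication by the rational function $f^{-\ka}$, which is regular and invertible in a neighbourhood of $Y_2$, and $\iota$ is a local isomorphism there. Once this bookkeeping is in place, the theorem reduces to a clean combination of Propositions~\ref{prop-projPol0} and~\ref{prop-projPol} with Lemma~\ref{lem-Vandermonde}.
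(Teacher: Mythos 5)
Your proof is correct and follows essentially the same route as the paper: reformulate membership of $(\el,\ka,\delta)$ in $\sg_{\flag}(\ds)$ in terms of sections $s'\in\vautor$ of the torified bundle, identify $\iota^*\vautor$ with $\C[Z]$ for $Z=\pi(\thet\cap\latm)$ via Proposition~\ref{prop-projPol}, and read off the achievable orders from Lemma~\ref{lem-Vandermonde}; the vanishing of $\eebarelka$ for $\ka\gg\el$ via the degree bound of Proposition~\ref{prop-projPol0} is also the intended argument. Your explicit check that the identifications preserve the exact vanishing order $\ka$ along $\curv$ and the order at the flag point is a point the paper's two-line proof leaves implicit, so that care is welcome rather than redundant.
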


\begin{proof}
The definition of the valuation semigroup can be reformulated as
$$\sg_{\flag}(\ds) = \{(\el, \ka,\delta)  \in \N^3 \, \vert \,  s' \in \kG(\var,\CLlktor)\setminus \{0\}, 
\ord_{1 \in \PP^1} (\iota^*s')=\delta \} .$$
Then everything follows from Proposition~\ref{prop-projPol}. 
\end{proof}

\section{Shape of the Newton--Okounkov body}
\label{sec:shapenob}

Building on Section~\ref{sgconstruction}, we determine the 
Newton--Okounkov body $\nob{\flag}{\ds}$ in Theorem~\ref{theonobshape}. 
Consider the assignment
\begin{align*} 
\de(\el,\ka)&\colonequals\wid{\direcn}(\thet) =\length(\pi(\thet)) =
              \length(\ko{\thet}) \in \Q \sqcup \{-\infty\}\, . 
\end{align*}
 \label{eq:d(l,k)}
We extend this definition to all $\el, \ka \in \R_{\ge 0}$ using the
convention $\wid{\direcn}(\emptyset) = -\infty$. 
This becomes necessary when  
$\ka \cdot \nefpolv$ does not fit inside
$ \el\cdot \divpol{\ds}$, which will happen for $\ka \gg \el$.

This should not be confused with $\debarelka$, which was defined on page \pageref{projpoly} as
$$\debarelka =  \el\cdot \wid{\direcn}(\divpol{\ds})-
\ka \cdot \wid{\direcn}( \nefpolv).$$ 
The chain of inclusions at the end of
Section~\ref{projpoly}  gives rise to the  inequalities
$$\eebarelka -1 \leq \de(\el,\ka)  \leq   \debarelka. $$

\begin{example} \label{ex7}
Continuing Example~\ref{ex6}, \textit{i.e.}, $(\el, \ka)=(1,1)$, $\debarelka = 5$, and $\eebarelka=2$, we obtain $\de(\el,\ka)=7/2 $ satisfying the inequalities $1 \leq 7/2 \leq 5$.
\end{example}

Moreover, we observe the following. 

\begin{lemma} \label{asymlemma}
For $q \in \R_{\geq 0}$, the assignment $\de(q)\colonequals \de(\el, \el q)/\el$ does not depend on $\el$.  In particular, $\de(q)= \de(1, q)$.
\end{lemma}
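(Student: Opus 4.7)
The plan is to reduce the claim to the homogeneity identity $\thet[\el,\el q] = \el \cdot \thet[1,q]$ for the polytopes involved, from which the width identity follows immediately because $\wid{\direcn}$ scales linearly under positive dilations.

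First, I would unpack the definition of $\thet$ from Subsection~\ref{makeToric}:
$$\thet[\el, \el q] = \{\op \in \latm_\R \,:\, \op + (\el q)\newpolv \subseteq \el\, \divpol{\ds}\}.$$
The change of variable $\op = \el \op'$ (a bijection on $\latm_\R$ since $\el \ge 1$) turns the defining inclusion into $\el(\op' + q\,\newpolv) \subseteq \el\,\divpol{\ds}$, equivalently $\op' + q\,\newpolv \subseteq \divpol{\ds}$, which is precisely the condition $\op' \in \thet[1,q]$. Hence, as subsets of $\latm_\R$,
$$\thet[\el,\el q] \;=\; \el \cdot \thet[1,q].$$

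Next, I would apply $\wid{\direcn}$. Since $\langle\cdot,\direcn\rangle$ is a linear functional, for any nonempty polytope $\poly \subseteq \latm_\R$ and any $\el \ge 1$ one has $\wid{\direcn}(\el \poly) = \el \cdot \wid{\direcn}(\poly)$. Combining this with the polytope identity above gives
$$\de(\el, \el q) \;=\; \wid{\direcn}(\thet[\el,\el q]) \;=\; \el \cdot \wid{\direcn}(\thet[1,q]) \;=\; \el \cdot \de(1,q),$$
and dividing by $\el$ yields $\de(\el,\el q)/\el = \de(1,q)$, as claimed.

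Finally, I would address the degenerate case where $\thet[1,q]$ is empty (which occurs for $q$ sufficiently large, since then $q\newpolv$ does not fit into $\divpol{\ds}$). The scaling identity still gives $\thet[\el,\el q] = \emptyset$, so under the convention $\wid{\direcn}(\emptyset) = -\infty$ both sides of the asserted equality equal $-\infty$, with the natural interpretation that $-\infty / \el = -\infty$. There is no real obstacle here: the lemma is essentially a homogeneity observation reflecting the fact that both $\el\,\divpol{\ds}$ and $\el q\,\newpolv$ scale by the same factor when $\el$ is varied along the ray $\ka = \el q$.
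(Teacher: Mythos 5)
Your proof is correct and follows the same idea as the paper, which disposes of the lemma with the one-line remark that the width function is linear in its polyhedral argument; you simply make explicit the underlying homogeneity $\thet[\el,\el q]=\el\cdot\thet[1,q]$ and the scaling of $\wid{\direcn}$, plus the $-\infty$ convention for the empty case.
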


\begin{proof}
The width function is linear in its polyhedral argument.
\end{proof}

Note that the same statement holds true for $\debarelka$ but not for $\eebarelka$.

\bigskip
From now on, we return  to $(\el,\ka) \in \N^2$. 

\begin{theorem} \label{theonobshape}
The Newton--Okounkov body  $\nob{\flag}{\ds}$ coincides with the convex hull of the set
$$\{ [q,t] \in (\R_{\geq 0})^2 \,\vert  \, 0 \leq t \leq  \de(q)\}
= \hspace{-0.3cm} \bigcup_{q \in \R_{\geq 0}, \, d(q) \geq 0}
\hspace{-0.5cm} \conv([q,0], [q,\de(q)]).$$ 
Moreover, 
$\de(q)$ is a decreasing piecewise linear  function
 with $\de(q)=-\infty$ for $q \gg 0$.
\end{theorem}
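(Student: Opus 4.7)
The plan is to combine Theorem~\ref{theoshapesg} with an asymptotic lattice-point computation, and then to read off the shape properties of $\de(q)$ directly from the definition of $\thet(1,q)$ via half-space inequalities.

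First I would substitute the explicit description of the valuation semigroup from Theorem~\ref{theoshapesg} into the definition of $\nob{\flag}{\ds}$, obtaining
$$\nob{\flag}{\ds} = \overline{\bigcup_{\el \geq 1,\, \ka \geq 0} \frac{1}{\el}\{(\ka,\delta) \in \N^2 \,\vert\, 0 \leq \delta \leq \eebarelka - 1\}}.$$
For the upper bound on vertical slices, the inequality $\eebarelka - 1 \leq \de(\el, \ka)$ (from the inclusion chain preceding the theorem) combined with Lemma~\ref{asymlemma} (giving $\de(\el,\el q) = \el \de(q)$) shows $(\eebarelka - 1)/\el \leq \de(q)$ for any representation $q = \ka/\el$; hence the closure is contained in $\{[q,t] \,\vert\, 0 \leq t \leq \de(q)\}$, and \emph{a fortiori} the convex hull of the latter contains $\nob{\flag}{\ds}$.

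For the reverse inclusion the main step is to show that for every rational $q \geq 0$, taking $\el \to \infty$ with $\el q \in \N$,
$$\frac{\eebarelka}{\el} \longrightarrow \de(q).$$
Using the scaling identity $\thet(\el,\el q) = \el \cdot \thet(1,q) =: \el P$, this reduces to an Ehrhart-type asymptotic
$$\#\pi(\el P \cap \latm) = \el \cdot \length(\ko{P}) + O(1).$$
Since $\direcm$ is primitive, every lattice point $\bar p \in \ko{\el P} \cap \ko{\latm}$ admits a lattice lift in $\el P$ as soon as the fiber $\pi^{-1}(\bar p) \cap \el P$ (a segment parallel to $\direcm$) has lattice length at least one; for large $\el$ this fails only for $\bar p$ in a bounded-width strip near $\partial \ko{\el P}$, which captures only $O(1)$ many lattice points. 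Combining this with the one-dimensional count $\#(\ko{\el P} \cap \ko{\latm}) = \el \length(\ko{P}) + O(1)$ yields the asymptotic. Density of the finite point sets $(q, \delta/\el)$ in $\{q\} \times [0,\de(q)]$ then delivers the reverse inclusion of vertical slices for rational $q$; irrational $q$ are handled by continuity of $\de$.

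Finally, to analyze the shape of $\de$, I would write $\divpol{\ds} = \{x \in \latm_\R \,\vert\, \langle x,\ray\rangle \geq -a_\ray \text{ for all } \ray \in \fan(1)\}$ and observe that $\thet(1,q) = \{m \in \latm_\R \,\vert\, \langle m, \ray\rangle \geq b_\ray(q) \text{ for all } \ray\}$ with $b_\ray(q) = -a_\ray - q \cdot \min_{x \in \newpolv}\langle x, \ray\rangle$ affine in $q$. Hence the vertices of $\thet(1,q)$ trace piecewise-linear paths in $q$, so $\de(q) = \wid{\direcn}(\thet(1,q))$ is piecewise linear on the interval where $\thet(1,q) \neq \emptyset$. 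Monotonicity follows from the containment $\thet(1,q') \supseteq \thet(1,q)$ for $q' \leq q$. That $\de(q) = -\infty$ for $q \gg 0$ follows because the positive-length segment $q\cdot \newpolv$ eventually fails to fit inside the bounded polytope $\divpol{\ds}$.

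The main obstacle is the Ehrhart-type asymptotic $\#\pi(\el P \cap \latm) = \el \length(\ko{P}) + O(1)$: though standard in spirit, one needs a careful bound on the set of points $\bar p \in \ko{\el P} \cap \ko{\latm}$ whose $\pi$-fibers meet $\el P$ in a segment too short to host a lattice lift, and this boundary-type correction is the technical heart of the argument.
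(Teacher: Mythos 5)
Your overall strategy coincides with the paper's: both directions reduce, via Theorem~\ref{theoshapesg} and the scaling Lemma~\ref{asymlemma}, to comparing $\eebarelka$ with $\de(\el,\ka)$, and the reverse inclusion rests on the observation that a fiber of $\pi$ of lattice length at least one over a point of $\ko{\latm}$ must contain a point of $\latm$, so that failures to lift are confined to a bounded region near the $\direcn$-extremes. The paper runs exactly this argument (with explicit $\epsilon,\mu,\lambda$ bookkeeping rather than Ehrhart language); your derivation of the ``Moreover'' clause from the affine dependence of the facet inequalities on $q$ is a welcome addition, since the paper leaves the piecewise linearity of $\de$ essentially implicit.

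There is, however, a genuine gap in your key asymptotic $\#\pi(\el P\cap\latm)=\el\cdot\length(\ko{P})+O(1)$: it fails when $P=\thet[1,q]$ is one-dimensional and not parallel to $\direcm$. This degenerate case does occur, namely for the unique $q_0$ at which a translate of $q_0\newpolv$ joins two parallel edges of $\divpol{\ds}$; then $\thet[1,q_0]$ is a segment parallel to those edges, every fiber $\pi^{-1}(\bar p)\cap\el P$ is a single point, and your ``bounded-width strip near $\partial\ko{\el P}$'' of non-liftable points is in fact all of $\ko{\el P}$. In this situation $\eebarelka/\el$ converges to $\de(q_0)/g$ with $g=\de(1,q_0)/\length_{\latm}(\thet[1,q_0])$, which can be strictly less than $\de(q_0)$, so the direct count does not fill the vertical slice at $q_0$. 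The theorem is still correct there, but your proof needs the patch the paper supplies: approximate $[q_0,t]$ by $[q_0-\epsilon,t]$, note that $t\le\de(q_0)\le\de(q_0-\epsilon)$ by monotonicity and that $\thet[1,q_0-\epsilon]$ is full-dimensional, apply your asymptotic there, and conclude by the closedness of $\nob{\flag}{\ds}$.
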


\begin{proof}
Let $\epsilon >0$ and denote by $A,B$ the vertices of $ \pi(\thet)=\ko{\thet}$.  First assume that the dimension of $\thet$ equals two.  Then the two fibers
$$\pi^{-1}( T ) \cap \thet$$
with $T = A+\epsilon$ or $B-\epsilon$ have positive lengths greater than (or equal to) some $\mu >0$.  In particular, all fibers in between do so as well.  Hence, setting $\lambda = 1/\mu$, the fibers
 $$\pi^{-1}( T' ) \cap \lambda \thet$$
have at least length one for $\lambda(A+\epsilon) \leq T' \leq \lambda(B-\epsilon) $.  If in addition $T' \in \ko{\latm}$, then all of these fibers have to contain lattice points in $\latm$.  Thus, we obtain
 $$\conv(\lambda(A+\epsilon) ,  \lambda(B-\epsilon) ) \subseteq  \pi(\lambda \thet \cap \latm) .$$
 
Keeping $q=\ka/\el$ constant and using Lemma~\ref{asymlemma}, we see that $\eebarelka$ behaves like $\de(\el,\ka)$ asymptotically with respect to dilations.  The result then follows by recalling the fact that Newton--Okounkov bodies are closed.

It remains to consider the pathological case $\dim(\thet[1,q])=1$.  Here, we can approximate $[q,t]$ by $[q-\epsilon,t]$ so that the resulting $\thet[1, q - \epsilon]$ is full-dimensional and $t \le \de(q) \le \de(q-\epsilon)$.  Then the previous argument shows that $[q-\epsilon,t] \in \nob{\flag}{\ds}$. As Newton--Okounkov bodies are closed by definition, $[q,t] \in \nob{\flag}{\ds}$.
\end{proof}

We remark that the case $\dim(\thet[1,q])=1$ from the previous proof requires special $\direcn$ and a unique $q_0=\ka/\el$.  This configuration is characterized by the fact that (a shift of) $q_0 \newpolv$ connects two parallel edges of $\divpol{\ds}$.  Note that $\thet$ is also parallel to these edges.  In contrast to the general case, for $\ka/\el=q_0$, the number $\eebarelka$ behaves asymptotically like $1/g \cdot \de(\el,\ka)$, where
$$g\colonequals \de(\el,\ka)/\length_{\latm}(\thet).$$
Despite that $\eebarelka$ for $\ka/\el=q_0$ does not approach $\de(\el,\ka)$ at all, this does not cause a problem: as we have seen in the proof, for $q \leq q_0$, the general case applies, and for $q > q_0$, we have $\de(q)=- \infty$ anyway.

\begin{example} \label{ex8}
We continue Example~\ref{ex7} and apply  Theorem~\ref{theonobshape}. 
The Newton--Okounkov body  $\nob{\flag}{\ds}$  (\textit{cf.}~Figure~\ref{figex7} and \cite{HKW20})
is given as
$$\nob{\flag}{\ds}= \conv([0,0],[0,25],[8/7,0],[2/3,35/3]).$$%

\begin{figure}[h!]
     \centering
        \includegraphics[]{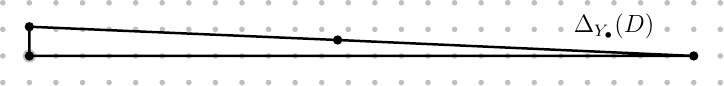}
       \caption{{\bf Newton--Okounkov body {\boldmath$\nob{\flag}{\ds}$}} 
       with flipped coordinates.}
\label{figex7}
\end{figure}

This example already gives an instance of a vertex that does not lift to the semigroup (\textit{cf.} Definition~\ref{def-liftPoints}) when building the Newton--Okounkov body in question.  Let us consider the vertex $[\frac{2}{3},\frac{35}{3}]$ and fix $\el=3$, $\ka=2$.  The respective polyhedra $3\divpol{\ds}$, $2\newpolv$, $2\nefpolv$, and $\thet=(3\divpol{\ds}: 2 \nefpolv)$ are pictured in Figure~\ref{figex8}.  To hit the vertex $[\frac{2}{3},\frac{35}{3}]$, the value of $\overline{\ee(3,2)}$ would have to coincide with $\de(3,2)=35$.  However, we only obtain $\overline{\ee(3,2)}=\#\big(\pi(\thet\cap \latm)\big)=30$.  The red lines in Figure~\ref{figex8} indicate the gaps, \textit{i.e.}, the fibers of $\pi$ with no lattice points in $\thet$.  No matter how big of a multiple of $(\el,\ka)$ we consider, the gaps will not be closed in any scaled version of the situation.  Hence, the vertex $[\frac{2}{3},\frac{35}{3}]$ is never hit, and the associated valuation semigroup $\sg_{\flag}(\ds)$ is therefore not finitely generated.
\end{example}

\begin{figure}[h!]
\includegraphics[]{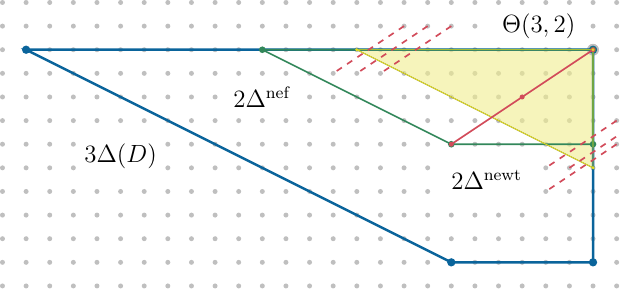}
\caption{{\bf Combinatorial view on a non-finitely generated semigroup {\boldmath $\sg_{\flag}(\ds)$}.} 
The polytopes $3\divpol{\ds}$, $2\newpolv$, $2\nefpolv$, and 
$\Theta(3,2)=(3\divpol{\ds}:2 \nefpolv)$. The dashed red lines indicate the difference between $\overline{\ee(3,2)}-1=29$ and 
$d(3,2)=35$.
}
\label{figex8}
\end{figure}

\section{Criterion for the finite generation of certain valuation semigroups} 
\label{fgSG}

We provide a criterion for the finite generation of 
strictly positive (with respect to their height functions)
semigroups in terms
of their limit polyhedra. 

\subsection{Semigroups with polyhedral limit}
\label{sgPolLim}
We start with a free abelian group $\latm$ of rank $n$, \textit{i.e.}, $\latm \cong\Z^n$, and a linear form $h\colon\latm\surj\Z$ which we call a \emph{height function}.  This induces $h_\R \colon \latm_\R= \latm\otimes_\Z\R\surj\R$, which we will often denote by $h$ as well.

Let $S\subseteq h^{-1}(\N)$ be a semigroup that is \emph{strictly positive}; \textit{i.e.}, $S\cap\ker(h)=\{0\}$.  In order to refer to the individual layers of a given height, we will write
$$
\textstyle
S_k\colonequals S\cap h^{-1}(k);\;
\mbox{\textit{i.e.}, we have }S=\bigcup_{k\in\N} S_k\;\mbox{ with } S_0=\{0\} .
$$
This setup allows us to define the enveloping cone
$$\istar_S\colonequals \ko{\cone S}\subseteq \latm_\R$$
as well as the convex limit figure
$$\poly_S\colonequals \istar_S \cap h_\R^{-1}(1)
\supseteq S_1 .$$

In the case of a valuation semigroup $\sg_{\flag}(\ds)$, the 
height 
$h \colon \Z^3 \surj \Z$ is the projection on $\el$, which then leads to the Newton--Okounkov body $\nob{\flag}{\ds}= \istar_{\sg_{\flag}(\ds)} \cap h_\R^{-1}(1) \subseteq h_\R^{-1}(1) \cong \R^2$.

\begin{definition}
\label{def-sgPolLim}
We say that $S$ has a {\em polyhedral limit} if $\poly_S$ is a 
polytope, \textit{i.e.}, if $\poly_S$ equals the convex hull of its
(finitely many) vertices.
\end{definition}

This property is fulfilled whenever the semigroup $S$ is finitely generated.  In this case, $\poly_S$ even has  rational vertices; it is a {rational polytope}.  However, the following standard example shows that the converse implication does not hold in general. 

\begin{example}
\label{ex-sgPolLim}
Let $h\colon\Z^2\to\Z, [x,y] \mapsto x+y$ be the summation map. Then
$S\colonequals \{0\}\cup(\Z_{\geq 1}\times \Z_{\geq 1})$ is not finitely generated,
but $\istar_S=\R^2_{\geq 0}$ and $\poly_S$ equals the line segment
connecting the points $[1,0]$ and $[0,1]$.
\end{example}

\subsection{Equivalent conditions for finite generation}
\label{equCondFg}

We assume that 
$S\subseteq \latm$ is a strictly positive (with respect to $h$)
semigroup that has  
polyhedral limit $\poly_S$.

\begin{definition}
\label{def-liftPoints}
We say that a point $p\in\poly_S$ {\em lifts to the semigroup $S$} (\textit{i.e.}, is a valuation point) if there exists some scalar $c \in\R_{> 0}$ with $c\cdot p\in S$.
\end{definition}

Note that in this case, both $p$ and $c$ have to be rational; \textit{i.e.}, $p\in\poly_S\cap \latm_\Q$ and $c\in\Q_{>0}$.  Hence, it is not a surprise that the assumption of the next lemma is automatically fulfilled if the semigroup $S$ is finitely generated.

\begin{lemma}
\label{lem-liftPoints}
If all vertices of $\poly_S$ lift to $S$, then they are rational
$($i.e., $\poly_S$ is a rational polytope$)$ 
and every rational point $p\in\poly_S\cap \latm_\Q$
lifts to $S$.
\end{lemma}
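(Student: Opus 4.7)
The plan is to split the statement into two independent steps: first observe that the hypothesis of lifting already forces the vertices to be rational, and then reduce the lifting problem for a general rational point in $\poly_S$ to the vertex case by choosing a rational convex-combination representation and clearing denominators.

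For the rationality of the vertices, I would argue that the definition of lifting is itself strong enough. If $v$ is a vertex of $\poly_S$ and $c \in \R_{>0}$ satisfies $c v \in S$, then $c v \in \latm$, hence $c = h(cv) \in \N_{>0}$ (since $h(\latm) \subseteq \Z$ and $h(v)=1$). Therefore $v = (cv)/c \in \latm_\Q$, and if every vertex lifts then $\poly_S$ is the convex hull of finitely many rational points, so it is a rational polytope.

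For the main statement, fix $p \in \poly_S \cap \latm_\Q$ and the vertices $v_1,\dots,v_r$ of $\poly_S$. The key step is to write $p$ as a \emph{rational} convex combination $p=\sum_{i=1}^r \lambda_i v_i$ with $\lambda_i \in \Q_{\geq 0}$ and $\sum_i \lambda_i = 1$. Such a representation exists over $\R$ by the definition of $\poly_S$ as a convex hull, and the coefficients may be chosen in $\Q$ because the conditions on $(\lambda_1,\dots,\lambda_r)$ form a system of linear equalities and inequalities with coefficients in $\Q$ (using rationality of $p$ and of the $v_i$); the nonempty real solution polytope then contains rational points (alternatively, one may triangulate $\poly_S$ rationally and choose the simplex containing $p$, whose barycentric coordinates are automatically rational).

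Once a rational representation is in hand, choose for each $i$ some $c_i \in \N_{>0}$ with $c_i v_i \in S$. Writing $\lambda_i = p_i/q_i$, take any $c \in \N_{>0}$ divisible by $q_i c_i$ for every $i$, so that $c\lambda_i/c_i \in \N_{\geq 0}$ for all $i$. Then
\[
c \cdot p \;=\; \sum_{i=1}^r (c\lambda_i)\, v_i \;=\; \sum_{i=1}^r \frac{c\lambda_i}{c_i}\,(c_i v_i)
\]
exhibits $c p$ as a nonneg integer combination of elements of $S$, hence as an element of $S$ (using that $S$ is a semigroup containing $0$). Therefore $p$ lifts.

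There is no serious obstacle in this proof; the only subtle point is the existence of a rational convex-combination representation for a rational point in a rational polytope, which is either a density/dimension argument in the rational solution space of a rational linear system, or follows from picking a rational simplex in a rational triangulation of $\poly_S$ containing $p$.
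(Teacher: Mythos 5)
Your proof is correct and follows essentially the same route as the paper: write $p$ as a rational convex combination of the (rational) vertices and clear denominators using the semigroup property. The only cosmetic difference is how rationality of the coefficients is justified — the paper picks affinely independent vertices so the coefficients are unique and hence rational, while you solve a rational linear system (or, in your alternative, use a rational triangulation, which is the paper's argument); both are fine.
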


\begin{proof}
Let $v^1,\ldots,v^d\in\poly_S$ be linearly independent (rational) vertices
such that $p$ is contained in their convex hull. Then the unique coefficients
$\lambda_i$ in the representation
$p=\sum_{i=1}^d\lambda_i\,v^i$ have to be rational, too. Thus, we may choose an
integer $\mu$ such that $\mu \cdot\lambda_i\in\N$.
On the other hand, 
there is a joint factor $c\in\Z_{\geq 1}$ such that all multiples
$c\cdot v^i$ belong to $S$. This implies 
\begin{equation*}\pushQED{\qed}
\textstyle
\mu c\cdot p
=\sum_{i=1}^d \mu\lambda_i\cdot (c\cdot v^i)\in S .
\qedhere\popQED
\end{equation*}
\renewcommand{\qed}{}
\end{proof}

Next, we  formulate the main point of this section.

\begin{proposition}
\label{prop-liftPoints}
A semigroup $S$ with a 
polyhedral limit $\poly_S$ is finitely generated if and only if
all vertices of $\poly_S$ lift to $S$.
\end{proposition}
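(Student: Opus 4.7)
The forward direction is immediate: if $S = \langle g_1, \ldots, g_r \rangle$, then $\istar_S = \cone(g_1, \ldots, g_r)$ is automatically closed, so $\poly_S = \conv\{g_i/h(g_i) : g_i \neq 0\}$, and every vertex of $\poly_S$ is one of these points and hence lifts to $S$ via the positive integer $h(g_i)$.

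For the converse, assume that all vertices of $\poly_S$ lift. My plan is to combine Lemma~\ref{lem-liftPoints} with Gordan's lemma and a Dickson-style argument in $\N^r$. By Lemma~\ref{lem-liftPoints}, $\poly_S$ is a rational polytope and every rational point of $\poly_S$ lifts to $S$. The boundedness of $\poly_S$ together with the strict positivity of $S$ forces $\istar_S = \R_{\geq 0} \cdot \poly_S$ to be a pointed rational polyhedral cone, so by Gordan's lemma the saturation $T \colonequals \istar_S \cap \latm$ is a finitely generated semigroup. Picking generators $t_1, \ldots, t_r$ of $T$, I apply the lifting lemma to each $t_i/h(t_i) \in \poly_S \cap \latm_\Q$ to find integers $n_i \geq 1$ with $n_i t_i \in S$, and set $N \colonequals \lcm(n_1,\ldots,n_r)$, so that $Nt_i \in S$ for every $i$ and hence $NT \subseteq S$.

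To deduce finite generation of $S$ from the resulting sandwich $NT \subseteq S \subseteq T$, I pass to $\N^r$ via the semigroup surjection $\phi \colon \N^r \to T$, $e_i \mapsto t_i$, and let $S' \colonequals \phi^{-1}(S)$. Then $S'$ is a subsemigroup of $\N^r$ containing $N\N^r$, and since $\phi(S') = S$ it suffices to prove that $S'$ is finitely generated. To that end, I stratify $S'$ by residue class modulo $N$: for each $c \in \{0,\ldots,N-1\}^r$, Dickson's lemma yields a finite set $M_c$ of componentwise-minimal elements of the slice $S'_c \colonequals S' \cap (c + N\N^r)$. Given $s' \in S'_c$ and $m \in M_c$ with $m \leq s'$, the difference $s' - m$ lives in $N\N^r \subseteq S'$ because $s'$ and $m$ sit in the same coset of $N\N^r$; this decomposes $s' = m + (s' - m)$ with both summands in $S'$, exhibiting $\bigcup_c M_c \cup \{Ne_1, \ldots, Ne_r\}$ as a finite generating set of $S'$, and $\phi$ of this set as a finite generating set of $S$.

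The main delicate point will be this Dickson-style bookkeeping: one must carefully match residue classes modulo $N$ so that the remainder $s' - m$ falls back into $N\N^r$ (and hence into $S'$) rather than merely into $\N^r$. Everything else — pointedness of $\istar_S$, applying Gordan, and reconstructing $S$ as the $\phi$-image of a finitely generated $S'$ — follows standard patterns once the setup is in place.
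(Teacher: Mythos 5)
Your proof is correct, and it reaches the conclusion by a genuinely different (though related) route from the paper's. The paper argues by contradiction: if $S$ is not finitely generated, its set of indecomposable elements is infinite; after a simplicial subdivision of $\istar_S$ with generators $s_1,\ldots,s_n$ lying in $S$ (via Lemma~\ref{lem-liftPoints}), it pigeonholes infinitely many indecomposables into a single coset of the sublattice $\Lambda=\Z s_1+\cdots+\Z s_n$ and obtains an infinite antichain in $\N^n$, contradicting Dickson's Lemma. You instead give a direct, constructive proof: Gordan's Lemma makes the saturation $T=\istar_S\cap\latm$ finitely generated, the lifting lemma plus an $\lcm$ yields the sandwich $NT\subseteq S\subseteq T$, and Dickson's Lemma applied coset-by-coset modulo $N\N^r$ produces an explicit finite generating set. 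Both arguments ultimately rest on Dickson's Lemma over finitely many congruence classes, but yours avoids the simplicial subdivision and the full-dimensionality reduction the paper invokes (at the cost of working with the possibly larger generating set of $T$), and it has the advantage of exhibiting generators rather than merely ruling out an infinite set of indecomposables. The one step worth spelling out is the conversion of the rational scalar $c$ from Definition~\ref{def-liftPoints} into a positive integer $n_i$ with $n_i t_i\in S$: write $c/h(t_i)=p/q$ and use that $S$ is closed under addition to clear the denominator. Everything else checks out, including the key congruence bookkeeping ensuring $s'-m\in N\N^r$.
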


We have already seen that this condition is necessary for the finite
generation.
Now we will show that it is sufficient, too. 
Note that, in Example~\ref{ex-sgPolLim}, the two vertices of the line 
segment $\poly_S$ indeed do  not lift to the semigroup.

Let $S\subseteq M$ be a subsemigroup with rational polyhedral limit
$\poly(S)$ (with respect to some height function $h\colon M\to\Z$). Assume
that the vertices
and thus, by Lemma~\ref{lem-liftPoints},  
all rational points of $\poly_S$ lift to $S$.
Moreover, we may assume that 
$\istar_S$ is a full-dimensional cone.

\begin{proof}[Proof of Proposition~\ref{prop-liftPoints}]
Assume that $S$ is not finitely generated. Then $S$ has infinitely many 
indecomposable elements; \textit{i.e.},
$$\hilb \colonequals S \setminus \big( (S \setminus \{0\}) + (S \setminus \{0\}) \big)$$
is infinite; \textit{cf.}~\cite[Proposition 1.2.23]{CoxBook}.

By taking a simplicial subdivision we may, w.l.o.g., assume that the cone $\istar_S$ is simplicial and given as $\istar_S= \cone(s_1, \ldots, s_n)$.  Consider the lattice $\Lambda$ generated by $s_1, \ldots, s_n$. As $M / \Lambda$ is finite, there must be a coset $m + \Lambda$ which contains infinitely many elements of $\hilb$. Here we may choose $m$ to be a minimal representative in $\istar_S$: $m \in \istar_S \cap M$ so that $m-s_i \not\in \istar_S$ for $i = 1,\ldots, n$.

As the elements in $\istar_S \cap \hilb$ were indecomposable in $S$, they certainly are indecomposable in $\istar_S \cap S$. In particular, if we identify $(m + \Lambda) \cap \istar_S$ with $\N^n$, we obtain an infinite set of pairwise incomparable elements, in contradiction to Dickson's lemma~\cite[Chapter 4, Theorem 5]{CLO}.
\end{proof}

\section{Finite generation criterion}

\subsection{Characterising the lifting property}

The following theorem gives  a purely combinatorial criterion to check if 
our valuation semigroup   $\sg_{\flag}(\ds)$ (in the language  
of Section~\ref{setup}) is finitely generated.
Recall the definition 
$$\de(\ka/\el) = \length(\pi(\thet))$$ 
from Section~\ref{sec:shapenob} on page 
\pageref{eq:d(l,k)}.

\begin{theorem}
\label{theofg}
The point
$(1,\ka/\el,\de(\ka/\el) ) =(1,\qu,\de(\qu))$
 is a valuation point
$($i.e., a multiple of it lies in $\sg_{\flag}(\ds)$$)$ if and only if 
there exists a $\lambda \in \N$ such that 
$$\pi\colon \lambda \thetq \cap \latm \longrightarrow \pi(\lambda \thetq) \cap  \ko{\latm}$$
is surjective $($i.e.,
$  \lambda \cdot  \de(\qu) = \overline{\ee(\lambda,\lambda\qu)}
-1)$ and $\pi(\lambda\thetq)$ has endpoints in $\bM$.
 \end{theorem}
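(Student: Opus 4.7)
My plan is to reduce the lifting property to a single numerical equality and then translate that equality into the two combinatorial conditions appearing in the statement.

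First, I would unfold definitions. The point $(1,\qu,\de(\qu))$ lifts to $\sg_{\flag}(\ds)$ precisely when there exists some $\lambda \in \N$, with $\lambda \qu$ and $\lambda \de(\qu)$ integral, such that $(\lambda, \lambda \qu, \lambda \de(\qu)) \in \sg_{\flag}(\ds)$. By Theorem~\ref{theoshapesg} this amounts to
\[
\lambda \de(\qu) \;\leq\; \overline{\ee(\lambda, \lambda \qu)} - 1.
\]
The chain of inequalities $\eebarelka - 1 \leq \de(\el,\ka)$ recorded at the start of Section~\ref{sec:shapenob}, combined with the homogeneity $\de(\lambda, \lambda \qu) = \lambda \de(\qu)$ of Lemma~\ref{asymlemma}, provides the reverse bound. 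Hence the lifting condition is equivalent to the equality
\[
\overline{\ee(\lambda, \lambda \qu)} \;=\; \lambda \de(\qu) + 1.
\]

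Next, I would translate this equality into geometry. The scaling identity $\lambda \thetq = \thet[\lambda,\lambda \qu]$, immediate from the definition of $\thet$ as a polyhedral difference, yields $\overline{\ee(\lambda, \lambda \qu)} = \#\pi(\lambda \thetq \cap \latm)$ and $\lambda \de(\qu) = \length(\pi(\lambda \thetq))$. Since $\pi(\lambda \thetq)$ is a closed line segment $I \subseteq \ko{\latm}_\R \cong \R$, an elementary count gives $\#(I \cap \ko{\latm}) \leq \length(I) + 1$, with equality if and only if both endpoints of $I$ lie in $\ko{\latm}$. Chaining these yields
\[
\overline{\ee(\lambda,\lambda \qu)} \;=\; \#\pi(\lambda\thetq \cap \latm) \;\leq\; \#(\pi(\lambda\thetq) \cap \ko{\latm}) \;\leq\; \lambda \de(\qu) + 1,
\]
where the first inequality becomes an equality precisely when $\pi\colon \lambda \thetq \cap \latm \to \pi(\lambda \thetq) \cap \ko{\latm}$ is surjective, and the second precisely when the endpoints of $\pi(\lambda \thetq)$ lie in $\ko{\latm}$.

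Combining the two steps closes the argument: the numerical equality $\overline{\ee(\lambda, \lambda \qu)} = \lambda \de(\qu) + 1$ holds if and only if both inequalities in the last display are saturated, which is exactly the conjunction of the two conditions in the theorem. The main obstacle is conceptual book-keeping rather than a hard computation: Lemma~\ref{asymlemma} provides homogeneity for $\de$ and $\debarelka$ but \emph{not} for $\eebarelka$, so the scaling must pass through the polytope $\lambda \thetq$ rather than through the lattice-point count directly; in addition, one must check that the integrality side conditions $\lambda \qu,\, \lambda \de(\qu) \in \N$ are forced in both directions of the equivalence.
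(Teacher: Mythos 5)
Your argument is correct and follows essentially the same route as the paper's proof: reduce via Theorem~\ref{theoshapesg} to the numerical condition $\lambda\,\de(\qu)\le \overline{\ee(\lambda,\lambda\qu)}-1$, observe that the reverse inequality always holds through the chain $\overline{\ee(\lambda,\lambda\qu)}\le \#\bigl(\pi(\lambda\thetq)\cap\ko{\latm}\bigr)\le \lambda\,\de(\qu)+1$, and identify saturation of the two links with surjectivity of $\pi$ on lattice points and integrality of the endpoints of $\pi(\lambda\thetq)$, respectively. Your added remark about the integrality side conditions on $\lambda\qu$ and $\lambda\de(\qu)$ is a sensible piece of book-keeping that the paper leaves implicit.
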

 
\begin{proof}
By definition, $(1,\qu,\de(\qu))$  is a valuation point if and only if
there exists a $\lambda \in \N$ such that we have 
$ \lambda \cdot (1,\qu,\de(\qu))  \in \sg_{\flag}(\ds) \subseteq \N^3$.
By Theorem~\ref{theoshapesg}, the latter happens exactly if 
$$0 \leq   \lambda \cdot \de(\qu) \leq \overline{\ee(\lambda,\lambda\qu)}-1,$$
where
$ \overline{\ee(\lambda,\lambda\qu)}= \# \pi(\lambda \thetq \cap \latm)$.
In addition, we see that 
 \begin{align*}
 \lambda \cdot \de(\qu)  =  \lambda \cdot \de(1,\qu)  &= \lambda \cdot \length(\pi(\thetq)) \\
 &=\length(\lambda \pi(\thetq))  =\length(\pi(\lambda  \thetq))  \\ 
 &\geq \# \pi(\lambda \thetq) \cap \ko{\latm} -1 \\
& \geq  \# \pi(\lambda\thetq \cap \latm) -1 = \overline{\ee(\lambda,\lambda\qu)}-1.
 \end{align*}
 Combining all these inequalities, we obtain the equations
  \begin{align} \label{inequ1}
 \lambda \cdot \de(\qu) = \# \pi(\lambda \thetq) \cap \ko{\latm} -1 
 \end{align}
 and 
   \begin{align} \label{inequ2}
 \# \pi(\lambda \thetq) \cap \ko{\latm}  =   \# \pi(\lambda\thetq \cap \latm) ,
 \end{align}
 where Equation~\eqref{inequ1} is equivalent to $ \pi(\lambda\thetq)$
 having end points in $\bM$, and Equation~\eqref{inequ2} to 
 $$\pi\colon \lambda \thetq \cap \latm \longrightarrow \pi(\lambda \thetq) \cap  \ko{\latm}$$
being surjective
(\textit{i.e.}, $\pi$ meets all possible lattice points in $\pi(\lambda \thetq) \cap  \ko{\latm}$).
\end{proof}

In the following, the \emph{tangent cone} of a polygon $\Theta$ at a point $r \in \Theta$ is the 
cone generated by $\Theta - r$. It is pointed if and only if $r$ is a vertex.

\begin{lemma}\label{lem_valpoint}
  Suppose that the functional $\direcn \colon M \to \Z$ takes its minimum and maximum values over $\thetq$ at the two vertices $\vminus'$ and $\vplus'$, respectively. Denote their tangent cones by $\coneminus'$ and $\coneplus'$.

  Then $(1,q,d(q))$ is a valuation point if and only if\, $1 \in \pi(\coneminus' \cap \latm)$ and $-1 \in \pi(\coneplus' \cap \latm)$.
\end{lemma}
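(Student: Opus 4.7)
The strategy is to translate the valuation-point criterion of Theorem~\ref{theofg} into a statement about the local geometry of $\thetq$ at its two $\direcn$-extremal vertices, using one key observation: since $\vminus'$ is the unique face on which $\direcn$ attains its minimum value $a \colonequals \langle \vminus', \direcn\rangle$, any lattice point of $\lambda\thetq$ whose $\pi$-image equals $\lambda a$ must coincide with $\lambda\vminus'$ (and symmetrically for $\vplus'$ and $b \colonequals \langle \vplus', \direcn\rangle$). In particular, whenever the surjectivity/endpoint conditions of Theorem~\ref{theofg} hold for some $\lambda$, they automatically force $\lambda \vminus', \lambda\vplus' \in \latm$. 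This observation is the bridge between Theorem~\ref{theofg} and the tangent-cone statement we want to prove.

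For the direction $(\Rightarrow)$, assume $(1,q,\de(q))$ is a valuation point. Since $\sg_{\flag}(\ds)$ is a semigroup, every positive integer multiple of an admissible $\lambda$ is again admissible in Theorem~\ref{theofg}, so we may take $\lambda$ as large and as divisible as needed. By the opening observation $\lambda\vminus' \in \latm$; surjectivity at the integer $\lambda a + 1$ (assuming $\lambda$ is large enough that $\lambda a + 1 \le \lambda b$) gives some $p \in \lambda\thetq \cap \latm$ with $\pi(p) = \lambda a + 1$. Setting $v \colonequals p - \lambda\vminus' \in \latm$ we obtain $v \in \coneminus'$, since $\coneminus'$ is also the tangent cone of $\lambda\thetq$ at $\lambda\vminus'$, together with $\pi(v) = 1$, so $1 \in \pi(\coneminus' \cap \latm)$. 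The symmetric argument at $\lambda b - 1$ provides $-1 \in \pi(\coneplus' \cap \latm)$.

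For the direction $(\Leftarrow)$, fix $v \in \coneminus' \cap \latm$ with $\pi(v)=1$ and $w \in \coneplus' \cap \latm$ with $\pi(w)=-1$, and let $t_0, t_0' > 0$ be the maximal parameters with $\vminus' + t v, \vplus' + t w \in \thetq$, respectively. I would pick $\lambda \in \N$ large and divisible enough so that $\lambda\vminus', \lambda\vplus' \in \latm$ (which supplies the endpoint condition of Theorem~\ref{theofg}) and so that three windows together cover every integer in $[\lambda a, \lambda b]$: the two tangent-cone windows $\{\lambda a + j : 0 \le j \le \lfloor \lambda t_0 \rfloor\}$ realized by $\lambda\vminus' + jv \in \lambda\thetq \cap \latm$ and symmetrically $\{\lambda b - j : 0 \le j \le \lfloor \lambda t_0' \rfloor\}$ realized by $\lambda\vplus' + jw$; and a generic-fiber window in the middle, namely any integer $r$ with $\min(r-\lambda a, \lambda b - r) \ge C$ for a constant $C$ depending only on $\thetq$ and on the primitive vector $\direcm$, for which the fiber $\lambda\thetq \cap \pi^{-1}(r)$ is a segment parallel to $\direcm$ long enough to meet the relevant $\Z\direcm$-coset. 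Because the tangent-cone windows grow linearly in $\lambda$ while $C$ is fixed, for $\lambda \gg 0$ the three windows overlap and the restriction of $\pi$ to $\lambda\thetq \cap \latm$ is surjective onto $\pi(\lambda\thetq) \cap \ko{\latm}$; Theorem~\ref{theofg} then yields the valuation-point property.

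The main obstacle I anticipate lies in the quantitative geometry of the backward direction: one has to pin down the constant $C$ explicitly (it comes from the inward edge normals at $\vminus', \vplus'$ together with $|\direcm|$) and verify that the tangent-cone windows of length $\lfloor \lambda t_0 \rfloor$ and $\lfloor \lambda t_0'\rfloor$ actually overlap the middle window for $\lambda$ large. Since both estimates become arbitrarily loose as $\lambda \to \infty$, the core of the argument is lattice-point bookkeeping rather than a structural claim; the only genuine subtlety is the degenerate case in which $\thetq$ is one-dimensional, which can be handled along the lines used in the remark following Theorem~\ref{theonobshape}.
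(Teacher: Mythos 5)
Your proposal is correct and follows essentially the same route as the paper: the forward direction translates a surjectivity witness at level $\lambda a+1$ into a lattice point of the tangent cone at $\lambda\vminus'$, and the backward direction covers $\pi(\lambda \thetq)\cap\ko{\latm}$ by two tangent-cone windows growing linearly in $\lambda$ together with a middle region whose fibers have lattice length at least one. Your explicit observation that surjectivity plus the endpoint condition forces $\lambda\vminus',\lambda\vplus'\in\latm$ is used only implicitly in the paper's argument and is a welcome clarification.
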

    
\begin{proof}
    If $(1,q,d(q))$ is a valuation point, then according to Theorem~\ref{theofg}, there exists a $\lambda \in \N$ such that $\pi(\lambda \thetq)$ has vertices in $ \ko{\latm}$ and such that $\pi\colon \lambda \thetq \cap \latm \to \pi(\lambda \thetq) \cap \ko{\latm}$ is surjective.  Hence, $1 \in \pi\left(\lambda (\thetq - \vminus') \cap \latm \right) \subset \pi(\coneminus' \cap \latm)$ and similarly for $\coneplus'$.

    For the converse, assume $1 \in \pi(\coneminus' \cap \latm)$ and $-1 \in \pi(\coneplus' \cap \latm)$. We will construct a suitable scaling factor $\lambda \in \N$ for which
    $$
    \pi\colon \lambda \thetq \cap \latm \lra \pi(\lambda \thetq) \cap  \ko{\latm}
    $$
    is surjective and $\pi(\lambda \thetq)$ is a lattice polytope, in order to again apply Theorem~\ref{theofg}. 
    To this end, choose levels $\delta_{\min}\in \Q_{>0}$ and $\delta_{\max}\in \Q_{>0}$ such that 
    \begin{equation*}
      \coneminus'  \cap   \left[ \direcn \leq \delta_{\min} \right] \subset \thetq -   \vminus'  \quad  \text{and}  \quad 
        \coneplus'  \cap   \left[ \direcn \geq \delta_{\max} \right]  \subset \thetq -  \vplus'.
    \end{equation*}
 We set 
    \begin{equation*}
        \varepsilon_{\min} = \length \left( \coneminus' \cap \left[ \direcn = \delta_{\min} \right]  \right) \quad \text{and} \quad \varepsilon_{\max} = \length \left( \coneplus' \cap \left[ \direcn = \delta_{\max} \right]  \right)\,  .
    \end{equation*}
    Choose a $\lambda \in \N$ with $\lambda > \frac{1}{\varepsilon_{\min}}$ and $\lambda > \frac{1}{\varepsilon_{\max}}$  such that $\lambda \vminus'$ and $\lambda \vplus' $ are lattice points in $\latm$. Then the corresponding projection $\pi$ is surjective. To show that, we divide the image $\pi(\lambda \thetq)$ into three parts. The first $\lambda \delta_{\min}$ lattice points in $\pi (\lambda \thetq) \cap \ko\latm$ are in the image of $\pi$ because $\pi $ restricted to $\coneminus'$ is surjective by assumption. The same argument holds for the last $\lambda \delta_{\max}$ points since $\pi$ is surjective on $\coneplus'$. The points in between are hit by projecting the lattice points in $\lambda \thetq$ because all the respective fibers have length greater than $\length(\direcm)=1$, by construction. Thus $\pi$ is surjective and $(1,q,d(q))$ is a valuation point, according to Theorem~\ref{theofg}.
\end{proof}

\begin{remark}
The case where $\direcn$ takes its minimum (or maximum) not at a vertex but at an edge is actually easier to handle. As soon as $\lambda \thetq$ is a lattice polytope, the edge is an integral multiple of $\direcm$. So we can omit the first (or last) of the three parts in the above proof.
\end{remark}

\subsection{Strong decomposability}
We have seen that it is important to decide the surjectivity of the projection of lattice points in a cone in~$\latm_\R$. Next, we will translate this surjectivity into a statement in $\latn$.  To this end, we introduce the following notion.

\begin{definition}  \label{dfnsdecomp}
Let $\con \subseteq \latn_\R$ be a cone. A lattice point $\vect \in \interior(\con)\cap \latn$ is 
\emph{strongly decomposable in $\con$} if $\vect=\vect'+\vect''$ for suitable $\vect',\vect'' \in \interior(\con)\cap \latn $.  
\end{definition}

\begin{lemma} \label{lem:decomposable}
Let $\con \subseteq \latn_\R$ be a cone and 
$\vectK \in \interior(\con)\cap \latn $ a direction. 
Then the following statements are equivalent:
\begin{enumerate}
\item\label{l:d-1} We have $1 \notin \langle \hilb_{ \con^\vee}, \vectK \rangle$,
where $\hilb_{ \con^\vee}$ denotes the Hilbert basis of $ \con^\vee$. 
\item\label{l:d-2} The direction $\vectK$ is strongly decomposable in $\con$. 
\item\label{l:d-3} The closure of the one-parameter subgroup
$\lambda^{\vectK}(\C^*)\subseteq\tor$
in $\toric(\con)$ is singular. 
\end{enumerate}
\end{lemma}

\begin{proof}
\eqref{l:d-1} and \eqref{l:d-3} are equivalent: The $1$-parameter subgroup
represented by $\vectK$ can always be extended to 
$\lambda^{\vectK}\colon\C\to\toric(\con)$. On the dual level of regular
functions, however, this corresponds to
$$\langle\,\cdot\, ,\vectK \rangle\colon\C[\con^\vee\cap \latm]\longrightarrow\C[\N].$$ 
The latter map is 
surjective if and only if $1\in \langle \con^\vee\cap M,\vectK\rangle$.

\eqref{l:d-1} $\Rightarrow$ \eqref{l:d-2}: By assumption, there exist primitive lattice points 
$s^0,s^1\in M \setminus \con^\vee$ such that the line segment between 
them lying on $[\vectK=1]$
contains no interior lattice point but intersects $\con^\vee$. Note that 
$\{s^0,s^1\}$ is a $\Z$-basis because
the lattice triangle $\conv(0,s^0,s^1)$ is unimodular. Then 
$$\interior\left(\cone\left(s^0,s^1\right)\right) \supset \con^\vee\quad \text{and thus}\quad \cone\left(t^0,t^1\right) \subset \interior(\con),$$
where $\{t^0,t^1\}$ denotes the basis dual to $\{s^0,s^1\}$.  By the definition of the dual basis, this yields $\langle s^i, \vectK \rangle = 1 = \langle s^i, t^0+t^1 \rangle$ for all $i$; \textit{i.e.}, the two linear functionals coincide on the basis. Therefore, $\vectK = t^0+t^1$.

\eqref{l:d-2} $\Rightarrow$ \eqref{l:d-1}: Let $\vectK$ be strongly decomposable in $\con$.  Then $\vectK=\vect'+\vect''$ for some $\vect',\vect'' \in \interior(\con)\cap \latn$.  Therefore, we have $\langle \op , \vectK \rangle = \langle \op, \vect' \rangle +\langle \op, \vect'' \rangle \in \Z$ for $\op \in \hilb_{ \con^\vee}$.  Since $\vect'$ and $\vect''$ lie in the interior of $\con$, both summands are positive.  Thus, $1 \notin \langle \hilb_{ \con^\vee} , \vectK \rangle$.
\end{proof}

\begin{figure}[h!]
     \centering
     \begin{subfigure}[b]{0.45\textwidth}
         \centering
               \includegraphics[]{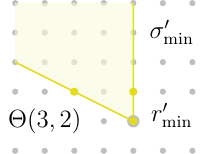}
         \caption{}
              \end{subfigure}
          \hfill
     \begin{subfigure}[b]{0.45\textwidth}
         \centering
        \includegraphics[]{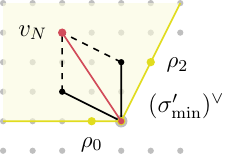}
         \caption{}
     \end{subfigure}
       \caption{{\bf Strongly decomposable primitive element.} 
       (A) The polytope $\Theta(3,2)$ with  tangent cone $\coneminus'$ 
       at the $\direcn$-extremal vertex $\vminus'$. 
       (B) The dual cone $(\coneminus')^\vee$ together with the strong 
       decomposition $(-2,3)=(-2,1)+(0,2)$ of $\direcn$ inside it.}
\label{fig:decomposable}
\end{figure}

\begin{example}\label{ex9}
We continue Example~\ref{ex8} and  fix $(\el,\ka)= (3,2)$. 
Then $$\thet = \conv([0,0],[-10,0],[0,-5])$$ 
(\textit{cf.} Figure~\ref{figex8}).
Its two $\direcn$-extremal vertices are $\vplus'=[-10,0]$ and $\vminus'=[0,-5]$.
Thus,
the corresponding tangent cones $\coneminus'$ and $\coneplus'$ 
are given as $\cone([0,1],[-2,1])$ and $\cone([1,0],[2,-1])$,
respectively (\textit{cf.} Section~\ref{altthet}). 
Hence the direction $\direcn=(-2,3)$ is contained in the interior of 
$(\coneminus')^{\vee}=\cone((-1,0),(1,2))$. 
It is strongly decomposable in $(\coneminus')^{\vee}$ because $\direcn=(-2,1)+(0,2)$
(\textit{cf.}~Figure~\ref{fig:decomposable}). 
An application of Lemma~\ref{lem:decomposable} yields that we do not obtain $1$ via 
$\langle \hilb_{ \coneminus'}, \direcn \rangle$.
\end{example}

Now we are ready to formulate our finite generation criterion.
The essence is that it suffices to check strong decomposability of $\direcn$ in two specific cones which we now define.

\begin{definition} \label{dfndline}
Given $\divpol{\ds}$ and $\direcn$, we define the rational line segment $\dline \subseteq \divpol{\ds}$ as a line segment $\divpol{\ds} \cap [\direcn = c]$ of maximal length orthogonal to $\direcn$.  We call its vertices $\dlineone$ and $\dlinetwo \in \latm_\R$ and its length $\quhat$ so that $\dlinetwo-\dlineone = \quhat \direcm$.  Moreover, we denote by $\ebigone$ the part of the edge of $\divpol{\ds}$ with vertex $\dlineone$ lying in the half-plane $[\direcn \geq c]$, and by $\ebigtwo$ the part of the edge of $\divpol{\ds}$ with vertex $\dlinetwo$ also lying  in the half-plane $[\direcn \geq c]$.  The cone $\consmall \subseteq \latn_\R$ is the cone generated by the inner normal vectors of $\ebigone$ and $\ebigtwo$.
  
 In the same manner, we define the line segments $\esmallone$ and $\esmalltwo$ contained in $[\direcn \leq c]$, which yield the cone~$\conbig$.
\end{definition}

Observe that $\thetq \neq \emptyset$ (and thus $\de(\qu) \ge 0)$ if and only if $q \in [0,\quhat]$.
         
\begin{theorem}\label{thmfgsg}
The valuation semigroup $\sg_{\flag}(\ds)$ is finitely generated if and only if $\direcn$ is not strongly decomposable in $\conbig$ and $-\direcn$ is not strongly decomposable in $\consmall$.
\end{theorem}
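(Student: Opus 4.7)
The plan is to combine four ingredients: the vertex-lifting criterion for finite generation (Proposition~\ref{prop-liftPoints}), the shape of $\nob{\flag}{\ds}$ from Theorem~\ref{theonobshape}, the pointwise lifting criterion (Lemma~\ref{lem_valpoint}), and the decomposability reformulation (Lemma~\ref{lem:decomposable}). Applying Proposition~\ref{prop-liftPoints} to $S=\sg_\flag(\ds)$ with height given by the $\el$-projection, finite generation is equivalent to every vertex of $\nob{\flag}{\ds}$ lifting to $S$. By Theorem~\ref{theonobshape}, these vertices come in two flavours: the axis vertices $(0,0)$, $(0,\de(0))$, $(\quhat,0)$, and the interior breakpoints $(q^*,\de(q^*))$ of the concave piecewise-linear function $\de$. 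I would first dispatch the axis vertices by elementary toric considerations: $(0,0)$ is trivial, $(0,\de(0))$ is realised by the lattice vertex $\vplus$ of $\divpol{\ds}$, and $(\quhat,0)$ is realised by a denominator-clearing multiple of $\dlineone\in\latm_\Q$, since $\thet[1,\quhat]=\{\dlineone\}$ by maximality of $\dline$.

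For an interior breakpoint $(q^*,\de(q^*))$, Lemma~\ref{lem_valpoint} translates liftability into the two surjectivity conditions $1\in\pi(\coneminus'\cap\latm)$ and $-1\in\pi(\coneplus'\cap\latm)$ for the tangent cones of $\thet[1,q^*]$ at its $\direcn$-extremal vertices. The crucial geometric claim is that as $q^*$ approaches $\quhat$, the segment $q^*\newpolv$ pins its $\direcn$-minimal position against the two edges $\esmallone$ and $\esmalltwo$ of $\divpol{\ds}$, and its $\direcn$-maximal position against $\ebigone$ and $\ebigtwo$, so that the dual tangent cones at this extremal breakpoint equal $(\coneminus')^\vee=\conbig$ and $(\coneplus')^\vee=\consmall$ exactly as in Definition~\ref{dfndline}.

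Next, I would observe that as $q^*$ ranges over all breakpoints in $(0,\quhat]$, the cones $(\coneminus')^\vee$ form a nested family in $\latn$ whose maximum is $\conbig$, and similarly $(\coneplus')^\vee$ attains its maximum at $\consmall$. Since any lattice decomposition in a subcone is automatically a decomposition in any cone containing it, non-decomposability in $\conbig$ forces non-decomposability in every smaller member of the family, so the extremal breakpoint near $\quhat$ provides the binding lifting condition. Applying Lemma~\ref{lem:decomposable} with $\con=\conbig$, $\vectK=\direcn$ and with $\con=\consmall$, $\vectK=-\direcn$ then converts the surjectivity statements into precisely the two non-decomposability conditions appearing in the theorem.

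The hardest part I expect will be the geometric identification of the tangent cones of the Minkowski difference $\thet[1,q^*]=(\divpol{\ds}:q^*\newpolv)$ near the degeneration at $q^*=\quhat$ and the verification that they coincide with $\conbig$ and $\consmall$. Subtleties arise in particular when $\dlineone$ or $\dlinetwo$ lie in the interior of an edge of $\divpol{\ds}$ rather than at a vertex, or when $\thet[1,\quhat]$ collapses to the single point $\{\dlineone\}$; such degenerate configurations require a case-by-case analysis of how the combinatorial type of $\thet[1,q]$ evolves as $q$ varies along the maximal length direction.
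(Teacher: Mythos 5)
Your skeleton is the paper's own: reduce finite generation to lifting the vertices of $\nob{\flag}{\ds}$ via Proposition~\ref{prop-liftPoints}, convert liftability of $(1,q,\de(q))$ into the two surjectivity conditions of Lemma~\ref{lem_valpoint}, translate those via Lemma~\ref{lem:decomposable} into strong (in)decomposability of $\pm\direcn$ in the dual tangent cones $(\coneminus')^\vee$ and $(\coneplus')^\vee$, and finally use that these cones are nested inside $\conbig$ and $\consmall$ with equality attained as $q$ approaches $\quhat$ --- this is exactly the containment $(\conbig)^{\vee}\subseteq\coneminus'$, $(\consmall)^{\vee}\subseteq\coneplus'$ on which the paper's short proof rests.

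There is, however, one concrete misstep: the claim that the axis vertex $(0,\de(0))$ ``is realised by the lattice vertex $\vplus$'' and can be dispatched by elementary toric considerations. A monomial $x^{\lambda m}$ restricted to the flag curve is $t^{\lambda\langle m,\direcn\rangle}$, which vanishes to order $0$ at $t=1$; to reach order $\lambda\de(0)=\length\big(\pi(\lambda\divpol{\ds})\big)$ one needs, by Theorem~\ref{theoshapesg} and Lemma~\ref{lem-Vandermonde}, the full count $\#\pi(\lambda\divpol{\ds}\cap\latm)=\lambda\de(0)+1$, i.e., surjectivity of $\pi$ on the lattice points of $\lambda\divpol{\ds}$. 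This is precisely the $q=0$ instance of Lemma~\ref{lem_valpoint} and amounts to non-decomposability of $\pm\direcn$ in the normal cones of $\divpol{\ds}$ at $\vminus$ and $\vplus$ --- a genuine condition, not an automatic one. Indeed, in the paper's Example~\ref{ex8} the vertex $(0,25)$ does \emph{not} lift: the tangent cone of $\divpol{\ds}$ at $\vminus=[0,-3]$ is $\cone\big((0,1),(-1,0)\big)$, whose lattice points pair with $\direcn=(-2,3)$ only to values $2a+3b$ with $a,b\in\N$, missing $1$. The repair stays entirely within your own argument: treat $q=0$ on the same footing as the interior breakpoints via Lemma~\ref{lem_valpoint}; the relevant normal cones are subcones of $\conbig$ and $\consmall$, so your nestedness observation shows the $q=0$ condition is implied by the two conditions in the statement. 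With that correction, your proof coincides with the paper's.
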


\begin{proof}
Combining Lemmas~\ref{lem_valpoint} and~\ref{lem:decomposable}, we see that $\sg_{\flag}(\ds)$ is finitely generated if and only if for every rational $\qu \in [0,\quhat]$, the vector $\direcn$ is not strongly decomposable in $(\coneminus')^\vee$ and $-\direcn$ is not strongly decomposable in $(\coneplus')^\vee$.

Now we use that for all $q \in [0,\quhat)$, 
 $$ (\conbig)^{\vee} \subseteq \coneminus' \quad \text{and} \quad (\consmall)^{\vee} \subseteq \coneplus',$$ so $\direcn$ is strongly decomposable in $(\coneminus')^\vee$ for some $q$ if and only if it is strongly decomposable in $\conbig$, and correspondingly for $-\direcn$.
\end{proof}

\begin{corollary}
The valuation semigroup $\sg_{\flag}(\ds)$ is finitely generated if and only if the morphism $\PP^1 \to \var'$ given by $\direcn$ is a smooth embedding, where $X'$ is the toric variety associated with the fan generated by $\conbig$ and $\consmall$.
\end{corollary}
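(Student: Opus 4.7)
My plan is to reduce the corollary directly to Theorem~\ref{thmfgsg}, passing through the equivalence (ii)~$\Leftrightarrow$~(iii) of Lemma~\ref{lem:decomposable} applied chart-by-chart on $\var'$.

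First I would verify the combinatorial setup. The cones $\conbig$ and $\consmall$ together with their faces do form a fan $\Sigma'$ on $\latn$: by definition $\conbig$ is generated by the inner normals to $\esmallone,\esmalltwo$, and these edges lie in the half-plane $[\direcn\le c]$, so their inner normals pair strictly positively with $\direcn$; symmetrically, $\consmall$ contains $-\direcn$ in its interior. Consequently, the lattice inclusion $\Z\direcn\hookrightarrow\latn$ is a map of fans $\Sigma_{\PP^1}\to\Sigma'$ and gives rise to the toric morphism $\iota'\colon\PP^1\to\var'$. The two standard affine charts of $\PP^1$ are sent into $\TV(\conbig)$ and $\TV(\consmall)$, and on them $\iota'$ restricts to the one-parameter subgroup maps $\lambda^{\direcn}\colon\A^1\to\TV(\conbig)$ and $\lambda^{-\direcn}\colon\A^1\to\TV(\consmall)$, respectively.

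Next, I would test ``smooth embedding'' locally on these two charts. In the proof of Lemma~\ref{lem:decomposable}, the 1-PS map $\lambda^{\vectK}$ corresponds on regular functions to $\langle\,\cdot\,,\vectK\rangle\colon\C[\con^\vee\cap\latm]\to\C[\N]$, whose image is the semigroup algebra of $\langle\hilb_{\con^\vee},\vectK\rangle\subseteq\N$. Thus the closure of $\lambda^{\vectK}(\C^*)$ in $\TV(\con)$ equals $\Spec$ of this image: it is a smooth closed subvariety isomorphic to $\A^1$ exactly when the ring map is surjective, i.e.\ when $1\in\langle\hilb_{\con^\vee},\vectK\rangle$. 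By (i)~$\Leftrightarrow$~(ii)~$\Leftrightarrow$~(iii) of Lemma~\ref{lem:decomposable}, this is precisely the condition that $\vectK$ is not strongly decomposable in $\con$. Applying this with $(\con,\vectK)=(\conbig,\direcn)$ and $(\consmall,-\direcn)$ shows that $\iota'$ is a smooth embedding if and only if $\direcn$ is not strongly decomposable in $\conbig$ and $-\direcn$ is not strongly decomposable in $\consmall$.

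By Theorem~\ref{thmfgsg}, this last condition is equivalent to finite generation of $\sg_{\flag}(\ds)$, which completes the argument. The main obstacle is really the bookkeeping in the first step: one must verify that the inner-normal generators of $\conbig$ and $\consmall$ are oriented correctly so that $\pm\direcn$ actually lie in the interior of the respective maximal cones, and that the gluing of the two affine pieces of $\var'$ along their common torus is compatible with the gluing of $\PP^1$. Once this is in place, the rest is a mechanical combination of Theorem~\ref{thmfgsg} with Lemma~\ref{lem:decomposable}.
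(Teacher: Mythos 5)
Your argument is correct and is exactly the intended derivation: the paper states this corollary without proof as an immediate consequence of Theorem~\ref{thmfgsg} combined with the equivalence (ii)\,$\Leftrightarrow$\,(iii) of Lemma~\ref{lem:decomposable}, applied chart by chart on $\TV(\conbig)$ and $\TV(\consmall)$ inside $\var'$, just as you do. The only wording slip is that the inner normals cannot ``pair positively with $\direcn$'' (both live in $\latn$); the fact you actually need, and which holds, is that $\direcn \in \interior(\conbig)$ and $-\direcn \in \interior(\consmall)$.
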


\begin{example}\label{ex7gon}
We apply Theorem~\ref{thmfgsg} to the $7$-gon $\divpol{\ds}$ (\textit{cf.} Figure~\ref{fig7gon}\subref{fig7gon1}) with vertices $[4,1]$, $ [7,2]$, $[9,3]$, $[6,5]$, $[1,8]$, $[1,7]$, and $[2,4]$ from \cite[Example 4.8]{castravet2020blown}, which is a \emph{good polytope} in the language of \textit{op.~cit.}.

\begin{figure}[h!]
\centering
     \begin{subfigure}[h]{0.45\textwidth}
         \centering
        \includegraphics[]{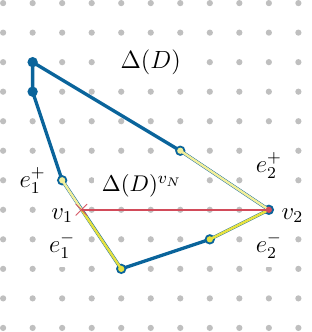}
             \caption{}
             \label{fig7gon1}
         \end{subfigure}
     \begin{subfigure}[h]{0.45\textwidth}
  \centering
        \includegraphics[]{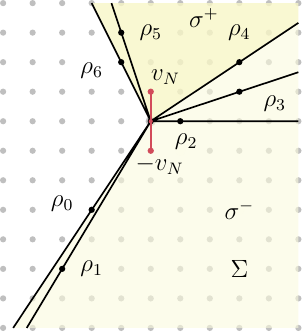}
        \caption{}
             \label{fig7gon2}
     \end{subfigure}
       \caption{{\bf The {good} {\boldmath$7$}-gon {\boldmath$\divpol{\ds}$}.} 
       (A) The polytope $\divpol{\ds}$ having seven vertices together with 
       the rational line segment $\dline = \divpol{\ds} \cap [\direcn=3]$ and
       its two vertices $\dlineone =[8/3,3]$, $\dlinetwo = [9,3]$.
       (B) The normal fan $\fan$ of $\divpol{\ds}$ having seven rays $\rho_i$ ($0 \leq i \leq 6$)
       together with the two cones $\conbig= \cone(\rho_4,\rho_6) \ni \direcn$  and 
       $\consmall= \cone(\rho_0,\rho_4) \ni - \direcn$.}
\label{fig7gon}
\end{figure} 

In \textit{op.~cit.},  the authors construct examples of projective toric surfaces whose blow-ups at the general point have a non-polyhedral pseudo-effective cone. In particular, this is the case for projective toric surfaces associated with good polytopes; \textit{cf.}~\cite[Definition 4.3, Theorem 4.4]{castravet2020blown}.

Consider the projective toric surface $\var = \toric(\fan)$ associated with 
the normal fan $\fan$ of $\divpol{\ds}$  with rays 
\begin{align*}
&\rho_0= (-2,-3),\;
\rho_1=(-3,-5),\; 
\rho_2=(1,0),\; 
\rho_3=(3,1), \\
&\rho_4=(3,2),\; 
\rho_5=(-1,3), 
\text{ and } \rho_6=(-1,2)
\end{align*}
(\textit{cf.} Figure~\ref{fig7gon}\subref{fig7gon2}) and an admissible flag $\flag \colon \var \supseteq \curv \supseteq \{1\}$ on $\var$ with $\direcn=(0,1)$.

To apply Theorem~\ref{thmfgsg}, we compute the following data: 
$$\dline = \divpol{\ds} \cap [\direcn=3] = \conv([8/3,3],[9,3]) ,$$ \textit{i.e.}, $\dlineone =[8/3,3]$ and $\dlinetwo = [9,3]$.  The inner normal vectors of $\ebigone$, $\ebigtwo$ and $\esmallone$, $\esmalltwo$ are $\rho_4$, $\rho_0$ and $\rho_4$, $\rho_6$, respectively.  Therefore,
$$\conbig= \cone(\rho_4,\rho_6) \ni (0,1)= \direcn\quad \text{and}\quad \consmall= \cone(\rho_4,\rho_0) \ni (0,-1)=-\direcn .$$ Thus the associated semigroup $\sg_{\flag}(\ds)$ is finitely generated because $\direcn= (0,1)$ is not strongly decomposable in $\conbig$ and $- \direcn= (0,-1)$ is not strongly decomposable in $\consmall$.
\end{example}

\subsection{Varying $\boldsymbol{\direcn}$ and $\boldsymbol{\ds}$}

The strategy to obtain a finitely generated semigroup by choosing an appropriate direction $\direcn$ does not always work. For the following example, there is no direction $\direcn$ that works.

\begin{example} \label{final:ex}
Consider the ample divisor $\ds$ associated with the polytope $\divpol{\ds}$ depicted in Figure~\ref{finalex}\subref{finalex1} on the toric variety $\var = \toric(\fan)$ corresponding to the fan $\fan$ depicted in Figure~\ref{finalex}\subref{finalex2}.  We claim that no matter what $\direcn \in N$ we pick, the resulting semigroup $\sg_{\flag}(\ds)$ will not be finitely generated.

\begin{figure}[h!]
\centering
     \begin{subfigure}[h]{0.45\textwidth}
         \centering
        \includegraphics[]{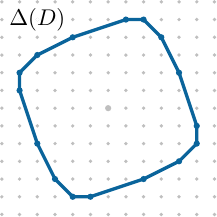}
             \caption{}
             \label{finalex1}
         \end{subfigure}
     \begin{subfigure}[h]{0.45\textwidth}
  \centering
        \includegraphics[]{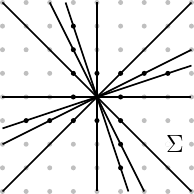}
              \caption{}
             \label{finalex2}
     \end{subfigure}
       \caption{{\bf {\boldmath$\sg_{\flag}(\ds)$} non-finitely generated for all {\boldmath$\direcn$}.} 
       (A) The polytope $\divpol{\ds}$ associated with an ample divisor $\ds$ on $\var = \toric(\fan)$.
       (B) The fan $\fan$ of $\toric(\fan)$ with $16$ rays.}
\label{finalex}
\end{figure} 

We will use our characterization in Theorem~\ref{thmfgsg}.
As $\divpol{\ds}$ is centrally symmetric, the longest line segment
$\dline$ in Definition~\ref{dfndline} will pass through the origin,
whatever $\direcn$.

We distinguish two cases: either the endpoints of the segment $\dline$ are vertices of $\divpol{\ds}$, or they belong to the interior of an edge. Up to symmetry, there are four vertices and four edges to consider. We will carry out the argument for one vertex and for one edge. The others are left to the reader.

If $\dline$ hits the interior of the edges $\edge_1^{\pm}, \edge_2^{\pm}$ indicated in Figure~\ref{finalexI}\subref{finalex1}, then $\direcn$ must belong to the interior of the red region in Figure~\ref{finalexI}\subref{finalex2}.  In this case, the cones $\sigma^\pm$ from Definition~\ref{dfndline} will be the two half-planes bounded by the dotted line which is perpendicular to the direction of $\edge_1^{\pm}, \edge_2^{\pm}$. But all lattice vectors in the red region are strongly decomposable in their half-plane as they all have lattice distance greater than one from the dotted line. (The vectors which are not strongly decomposable are the vectors at distance one; \textit{i.e.}, they lie on the dashed lines.)

If, on the other hand, $\dline$ contains the vertices $\dlineone, \dlinetwo$ indicated in Figure~\ref{finalexI}\subref{finalex5}, then $\direcn$ is determined up to sign, as are $\sigma^\pm$ (\textit{cf.} Figure~\ref{finalexI}\subref{finalex6}).  Again, we see that $\direcn$ is strongly decomposable.  (And again, the vectors which are not strongly decomposable are the vectors which lie on the dashed lines.)
\end{example}

\begin{figure}[t!]
\centering
     \begin{subfigure}[h]{0.45\textwidth}
         \centering
        \includegraphics[]{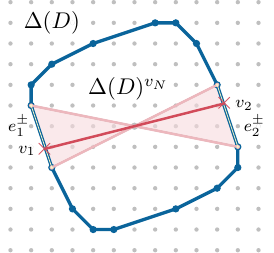}
             \caption{}
             \label{finalex3}
                 \vspace{0.3cm}
         \end{subfigure}
     \begin{subfigure}[h]{0.45\textwidth}
  \centering
        \includegraphics[]{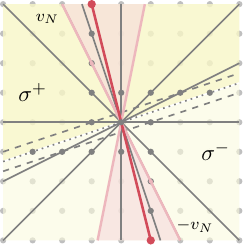}
        \caption{}
             \label{finalex4}
     \end{subfigure}
     \begin{subfigure}[h]{0.45\textwidth}
         \centering
        \includegraphics[]{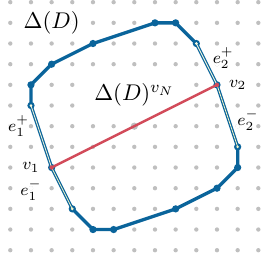}
                     \caption{}
             \label{finalex5}
         \end{subfigure}
     \begin{subfigure}[h]{0.45\textwidth}
  \centering
        \includegraphics[]{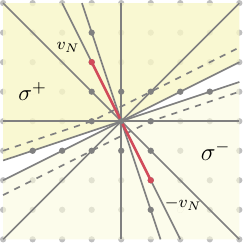}
                \caption{}
             \label{finalex6}
     \end{subfigure}
     \caption{{\bf Illustration of Example~\ref{final:ex}.}
     (A) The line segment $\dline$ containing the interior of the edges $\edge_1^{\pm}, \edge_2^{\pm}$.
     (B)  The possible region for $\pm \direcn$  in red  together with the two cones $\sigma^\pm$.
       (C) The line segment $\dline$ hitting two vertices $\dlineone, \dlinetwo$ of $\divpol{\ds}$.
       (D)  The cones $\sigma^\pm$ containing $\pm \direcn$.
       }
\label{finalexI}
\end{figure} 

\begin{proposition}\label{thm:finitelygenerated}
  Given a fan $\Sigma$ and a direction $\direcn$, the valuation
  semigroup $\sg_{\flag}(\ds)$ is finitely generated for all ample
  divisors $\ds$ on $\var$ if and only if $\direcn$ and $-\direcn$
  are not strongly decomposable in $\con$ for all cones 
  $\con \subseteq \latn_\R$ generated by rays of the given fan $\fan$.
\end{proposition}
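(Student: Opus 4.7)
My plan is to reduce both directions to Theorem~\ref{thmfgsg} by showing that, as $\ds$ ranges over the ample cone of $\var$, every pair of rays $(\rho_1,\rho_2)\in\fan(1)^2$ with $\direcn\in\interior(\cone(\rho_1,\rho_2))$ is realized as the pair generating $\conbig$ for some ample $\ds$ (and symmetrically for $\consmall$ with $-\direcn$ in place of $\direcn$).

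The direct implication $(\Leftarrow)$ will be essentially automatic: for any ample divisor $\ds$ on $\var$, Definition~\ref{dfndline} exhibits $\conbig$ and $\consmall$ as spanned by inner normals of certain edges of $\divpol{\ds}$, and since $\ds$ is ample the normal fan of $\divpol{\ds}$ equals $\fan$, so these normals belong to $\fan(1)$. The hypothesis of the proposition then rules out strong decomposability of $\direcn$ in $\conbig$ and of $-\direcn$ in $\consmall$, and Theorem~\ref{thmfgsg} delivers finite generation of $\sg_{\flag}(\ds)$.

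For the converse $(\Rightarrow)$ I argue by contraposition. Assume that $\con=\cone(\rho_1,\rho_2)$ with $\rho_1,\rho_2\in\fan(1)$ contains $\direcn$ in its interior and that $\direcn$ is strongly decomposable in $\con$ (the case where $-\direcn$ is strongly decomposable is strictly symmetric, swapping $\conbig$ with $\consmall$). It suffices to exhibit one ample divisor $\ds$ with $\conbig=\con$, since then Theorem~\ref{thmfgsg} immediately provides an ample $\ds$ whose $\sg_{\flag}(\ds)$ is not finitely generated. Using the support-function description $\divpol{\ds}=\{m\in\latm_\R : \langle m,\rho\rangle\ge -a_\rho \text{ for all } \rho\in\fan(1)\}$, the construction combines a small strictly convex baseline (guaranteeing ampleness of the whole perturbation) with large positive weights $a_{\rho_1}$ and $a_{\rho_2}$. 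Writing $\direcn=\alpha_1\rho_1+\alpha_2\rho_2$ with $\alpha_1,\alpha_2>0$, both $-\rho_1$ and $-\rho_2$ lie in $[\direcn\le 0]$, so this choice pushes the $\rho_1$- and $\rho_2$-edges of $\divpol{\ds}$ deep into the $\direcn$-small side, while edges corresponding to any rays lying in the interior of $\con$ (compare $\rho_5$ in the good $7$-gon of Example~\ref{ex7gon}) stay comparatively short.

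The main obstacle, and the only step requiring genuine estimation, will be to verify that the concave width function $c\mapsto\length\bigl(\divpol{\ds}\cap[\direcn=c]\bigr)$ attains its maximum at a level $c^*$ whose chord endpoints $\dlineone,\dlinetwo$ lie on the edges with inner normals $\rho_1,\rho_2$; once this is established, Definition~\ref{dfndline} immediately gives $\conbig=\con$. I plan to compare one-sided slopes of the width function across the finitely many regimes determined by which pair of edges contains the current chord endpoints: as long as both endpoints sit on the long $\rho_1$- and $\rho_2$-edges the slope is strictly positive (an explicit quantity computed from the primitive edge directions), and when an endpoint crosses onto an adjacent shorter edge the slope shifts by an amount controlled by the angle between consecutive rays of $\fan$. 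Choosing $a_{\rho_1},a_{\rho_2}$ sufficiently large forces this shift to drive the slope non-positive as soon as either endpoint has crossed off a desired edge, pinning the maximum of the width function at a level where both endpoints still lie on the $\rho_1$- and $\rho_2$-edges and completing the argument.
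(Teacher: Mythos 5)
Your ``$\Leftarrow$'' direction is fine and agrees with the paper: it is immediate from Theorem~\ref{thmfgsg} once one notes that $\conbig$ and $\consmall$ are generated by rays of $\fan$ because $\divpol{\ds}$ has normal fan $\fan$. The gap is in ``$\Rightarrow$''. Your reduction requires realizing the \emph{given} cone $\con=\cone(\rho_1,\rho_2)$ as $\conbig$ for some ample $\ds$, and the mechanism you propose cannot deliver this: the one-sided slopes of the concave width function $c\mapsto\length\bigl(\divpol{\ds}\cap[\direcn=c]\bigr)$ depend only on which pair of inner normals the two chord endpoints currently see (and on $\direcn$), never on the coefficients $a_\rho$. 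Enlarging $a_{\rho_1},a_{\rho_2}$ only moves the breakpoints; it cannot ``drive the slope non-positive''. Indeed, the slope in the regime where the endpoints lie on edges with inner normals $\tau,\tau'$ is positive exactly when $\direcn\in\interior(\cone(\tau,\tau'))$. Hence, if the rays of $\fan$ adjacent to $\rho_1$ and $\rho_2$ on the far side from $\direcn$, say $\tau$ and $\tau'$, still satisfy $\direcn\in\interior(\cone(\tau,\rho_2))$ and $\direcn\in\interior(\cone(\rho_1,\tau'))$, then the maximum of the width function always lies strictly above the $(\rho_1,\rho_2)$-regime and $\conbig\neq\con$ for \emph{every} ample $\ds$: your target is simply not realizable. (There is a secondary problem with ampleness: for most fans, adding $T(\ds_{\rho_1}+\ds_{\rho_2})$ to a fixed ample divisor makes the $\rho_1$- and $\rho_2$-facets shrink and eventually vanish, rather than making those edges long.)

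The argument can be rescued along your lines by noting that strong decomposability passes to larger cones --- if $\direcn=\vect'+\vect''$ with $\vect',\vect''\in\interior(\con)$ and $\con\subseteq\con'$, the same decomposition works in $\con'$ --- so one may replace $\con$ by a realizable enlargement; but you would then still owe a genuine realizability proof for that enlarged cone. The paper sidesteps all of this by controlling $\thet$ rather than $\divpol{\ds}$: it first writes down a divisor $\ds_{\thet}$ whose polytope has a vertex with tangent cone $\con^\vee$ (this is unconstrained precisely because $\thet$ need not have $\fan$ as its normal fan, so this vertex can absorb every ray lying in $\interior(\con)$), then sets $\ds'=\curvbar+\ds_{\thet}$ and adjusts the coefficients on the rays in $\interior(\con)$ to obtain an ample $\ds$ with $\thet=(\divpol{\ds}:\nefpolv)$; non-finite generation then follows from Lemma~\ref{lem_valpoint} and Lemma~\ref{lem:decomposable}. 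You should either adopt that construction or supply the enlargement step together with an actual realization of the enlarged cone as $\conbig$.
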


\begin{proof}
``$\Leftarrow$'': Theorem~\ref{thmfgsg}.
``$\Rightarrow$'':
Assume there exists a cone $\con$ built from rays of $\fan$
such that $\direcn$ is strongly decomposable in $\con$. 
We will construct an ample divisor $\ds$ 
such that  $\sg_{\flag}(\ds)$ is not finitely generated. 

In a first step, we build a torus-invariant divisor $\ds_{\thet}=\sum_{\ray \in \fan(1)}{b_\ray \ds_\ray}$ whose associated polytope $\thet$ has a vertex $\vminus'$ with tangent cone $\coneminus'=\con^\vee$.  For that we choose coefficients $b_\ray \in \Z$ such that $\ds_{\thet}$ has positive intersection numbers with all curves corresponding to rays $ \ray \nsubseteq \interior(\con)$. Then we relax the inequalities at $\vminus'$ by choosing the coefficients $b_\ray \gg 0$ for all $\ray \subseteq \interior(\con)$.  This guarantees that $\con^\vee$ is the tangent cone of $\thet$ at its vertex $\vminus'$.

Now, we want to define an ample torus-invariant divisor $\ds$ such that $\thet=(\divpol{\ds}: \nefpolv)$.  As an intermediate step, set $\ds'=\sum_{\ray \in \fan(1)}{a'_\ray \ds_\ray} \colonequals \curvbar+ \ds_{\thet}$.  Then by construction, the associated polytope $\divpol{\ds'}$ contains the Minkowski sum $\thet+ \nefpolv$. Moreover, the defining inequalities of $\thet+ \nefpolv$ coincide with the ones for the polytope $\divpol{\ds'}$ for all rays $\ray \nsubseteq \interior(\con)$.

In general, $\ds'$ is not nef and in particular not ample because it might have negative intersection with curves associated with remaining rays $\ray \subseteq \interior(\con)$.  Hence, as a last step, we define a torus-invariant divisor $\ds=\sum_{\ray \in \fan(1)}{a_\ray\ds_\ray}$ with $a_\ray=a'_\ray$ for all $\ray \in \fan(1)\setminus \interior(\con)$.  For the remaining rays $\ray \subseteq \interior(\con)$, choose coefficients $a_\ray$ small enough such that $\ds$ is ample and big enough such that $\thet+ \nefpolv \subseteq \divpol{\ds}$.  Then we have $\thet=(\divpol{\ds}: \nefpolv)$, and $\thet$ has a vertex $\vminus'$ with tangent cone $\coneminus'=\con^\vee$.  Since $\direcn$ is strongly decomposable in $\con$, it follows from Theorem~\ref{thmfgsg} that the semigroup $\sg_{\flag}(\ds)$ is not finitely generated.
\end{proof}

\begin{figure}[h!]
     \centering
     \begin{subfigure}[h]{0.25\textwidth}
           \includegraphics[]{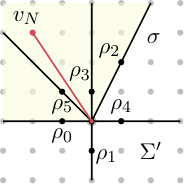}
         \caption{}
          \label{figex101}
     \end{subfigure}
     \begin{subfigure}[h]{0.25\textwidth}
         \centering
        \includegraphics[]{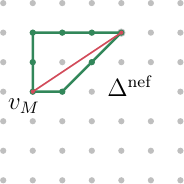}
         \caption{}
          \label{figex102}
     \end{subfigure}
     \begin{subfigure}[h]{0.25\textwidth}
         \centering
        \includegraphics[]{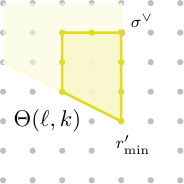}
         \caption{}
           \label{figex103}
     \end{subfigure}
     
     \vspace{0.3cm}
       \begin{subfigure}[h]{0.45\textwidth}
         \centering
        \includegraphics[]{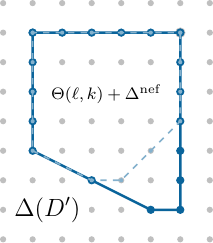}
         \caption{}
                 \label{figex104}
     \end{subfigure}
     \hspace{-0.5cm}
        \begin{subfigure}[h]{0.45\textwidth}
         \centering
        \includegraphics[]{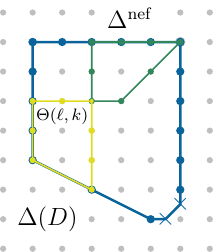}
         \caption{}
                 \label{figex105}
     \end{subfigure}
     \caption{{\bf Illustration of Proposition~\ref{thm:finitelygenerated}.}
     (A) The set of  rays $\fan'(1)=\fan(1) \cup \{  \rho_4,\rho_5 \}$, 
     a primitive element $\direcn=(-2,3) \in \latn$, and the cone $\sigma$ in which $\direcn$ is
     strongly decomposable.
     (B)  The Newton polytope $\newpolv$ given by $\direcm=[-3,-2]$ and the polytope $\nefpolv$ corresponding to 
       $\curvbar$ in $\var'=\TV(\fan')$.
       (C) The polytope $\thet$ having a vertex $\vminus'$ with tangent cone $\sigma^\vee$. 
       (D) The polytope $\divpol{\ds'}$ corresponding to the non-ample divisor $\ds'$ containing $\thet+\nefpolv$.
       (E) The polytope $\divpol{\ds}$ corresponding to the ample divisor $\ds$ with $\thet=(\divpol{\ds}: \nefpolv)$. 
}
  \label{figex10}
\end{figure}

\begin{example}\label{ex10}
We illustrate the construction from the proof of Proposition~\ref{thm:finitelygenerated} for a modification of our running Example~\ref{ex9} with $\direcn = (-2,3) \in \latn$.  Let $\var'=\TV(\fan')$ be the toric surface associated with the fan $\fan'$, where $\fan'(1)=\fan(1) \cup \{ \rho_4 , \rho_5 \}$ and $\rho_4= (1,0)$, $\rho_5=(-1,1)$.  Then (as in Example~\ref{ex9}) $\direcn = (-2,1)+(0,2)$ is strongly decomposable in $\sigma=\cone(\rho_0,\rho_2)$ (\textit{cf.}~Figure~\ref{figex10}\subref{figex101}).  Moreover,
$$\curvbar=7\ds_2 + 2\ds_3+3\ds_4$$
in $\var'=\TV(\fan')$
with 
$\nefpolv= \conv([0,0],[-3,0],[-3,-2],[-2,-2])$
 (\textit{cf.}~Figure~\ref{figex10}\subref{figex102}).
Choose the divisor 
 $\ds_{\thet}$ as $$\ds_{\thet}=6\ds_2+4\ds_3+2\ds_4+6\ds_5,$$
 \textit{i.e.},
 $\thet= \conv([0,0],[-2,0],[-2,-2],[0,-3])$
 (\textit{cf.} Figure~\ref{figex10}\subref{figex103}).
We obtain the divisor
$$\ds'=\ds_{\thet}+\curvbar=13\ds_2+6\ds_3+5\ds_4+6\ds_5$$
with $\divpol{\ds'} = \conv([0,0],[-5,0],[-5,-4],[-1,-6],[0,-6])$ and $\thet + \nefpolv \subseteq \divpol{\ds'}$ (\textit{cf.} Figure~\ref{figex10}\subref{figex104}).  Note that ${\ds'}$ is not ample since we have  $\ds' \cdot \ds_5=0$.

To make it ample, adjust the coefficient of $\ds_5$ to $5.5$. Then $\ds$ is ample with $\divpol{\ds} = \conv([0,0],[-5,0],$ $[-5,-4],[-1,-6],[-0.5,-6],[0,-5.5])$ (\textit{cf.}~Figure~\ref{figex10}\subref{figex105}).  Moreover, we have $\thet=(\divpol{\ds}: \nefpolv)$.
\end{example}


\end{document}